\documentclass[11pt,reqno]{amsart}

\usepackage{amsmath,amssymb,amsthm,mathtools,bm}
\usepackage{enumitem}
\usepackage{xcolor}
\usepackage{microtype}
\usepackage[
    colorlinks=true,
    linkcolor=blue!55!black,
    citecolor=blue!55!black,
    urlcolor=blue!55!black
]{hyperref}

\allowdisplaybreaks
\numberwithin{equation}{section}

\theoremstyle{plain}
\newtheorem{theorem}{Theorem}[section]
\newtheorem{lemma}[theorem]{Lemma}
\newtheorem{proposition}[theorem]{Proposition}
\newtheorem{corollary}[theorem]{Corollary}

\theoremstyle{definition}

\newtheorem*{counterexample}{Counterexample}

\theoremstyle{remark}
\newtheorem{remark}[theorem]{Remark}

\newcommand{\tr}{\operatorname{tr}}
\newcommand{\diver}{\operatorname{div}}
\newcommand{\dd}{\,\mathrm{d}}
\newcommand{\cG}{\mathcal{G}}

\title[Interior Hessian Estimates for Quotient Equations]
{Interior Hessian Estimates for Semi-convex Solutions of the
$\sigma_2/\sigma_1$ Equation with Lipschitz Right-Hand Sides}

\author{Ke Ji}
\address{
School of Mathematics and Statistics, Xi’an Jiaotong University, Xi’an, 710049, PR China
}
\email{jkxjtu123@stu.xjtu.edu.cn}

\author{Lichun Liang}
\address{
School of Mathematical Sciences, Chongqing Normal University, Chongqing, 401331, PR China
}
\email{lianglichun@cqnu.edu.cn}

\thanks{Corresponding author: Lichun Liang.}

\subjclass[2020]{35J60, 35B45, 35B65}

\keywords{
Hessian quotient equation,
interior Hessian estimate,
semi-convex solution,
Lipschitz right-hand side,
Jacobi inequality
}

\date{}

\begin{document}

\begin{abstract}
Let $n\ge2$ and let $u$ be a smooth 2-convex and semi-convex solution of
\[ \frac{\sigma _2(D^2u)}{\sigma _1(D^2u)}=f(x).
\]
We prove an interior Hessian estimate depending on the Lipschitz norm of $f$.
 The proof combines the integral approach of
Chen--Jian--Zhou with the algebraic reduction of the quotient equation to a
$\sigma _2$ structure.  The main new point is a shifted algebraic inequality that yields a shifted trace  Jacobi inequality in divergence form for $\log(\Delta u+a)$. We work with the linearized operator $G=(\Delta u-f)I-D^2u$ of the equivalent equation $\sigma_2(D^2u)=f \Delta u$. The almost divergence-free identity \(\partial_iG_{ij}=-f_j\) enables us to control the \(\Delta f\) term by integration by parts solely in terms of the Lipschitz norm of \(f\).  A Legendre--Lewy transformation converts the resulting degenerate divergence-form equation into a uniformly elliptic one. The estimate then follows from a mean-value inequality together with a weighted energy argument. As an application, in dimension two we obtain interior $C^2$ regularity for convex viscosity solutions with positive Lipschitz right-hand side. Moreover, our counterexamples show that the Lipschitz regularity required of the right-hand side is optimal.
\end{abstract}

\maketitle


\section{Introduction}
\noindent

This paper is devoted to obtain interior $C^2$ estimates for the Hessian quotient equation
\begin{equation}\label{eq:main-equation}
\left(\frac{\sigma_2}{\sigma_1}\right)(D^2u)
=
\left(\frac{\sigma_2}{\sigma_1}\right)
\bigl(\lambda(D^2u)\bigr)
=f(x),
\end{equation}
where $D^2u$ is the Hessian of $u$,
$\lambda=(\lambda_1,\ldots,\lambda_n)$ are the eigenvalues of $D^2u$,
\[
\sigma_k(\lambda)
=
\sum_{1\le i_1<\cdots<i_k\le n}
\lambda_{i_1}\cdots\lambda_{i_k}
\]
is the $k$-th elementary symmetric function defined on the $k$-th
G{\aa}rding's cone
\[
\Gamma_k
=
\left\{
\lambda\in\mathbb{R}^n
\,\middle|\,
\sigma_j(\lambda)>0,\quad j=1,\ldots,k
\right\},
\]
and $f$ is a prescribed positive function. A function $u\in C^2$ is called $k$-convex if $\lambda(D^2u)\in \Gamma_k$.

It is known that a standard pointwise argument differentiates the fully nonlinear equation twice and estimates the resulting term involving $D^2f$, thereby yielding an estimate whose constant depends on the $C^{1,1}$ or $C^2$ norm of $f$. 
We ask whether the second derivatives of $f$ can be removed  from the interior Hessian estimate for the Hessian quotient equation (\ref{eq:main-equation}). The main result gives an affirmative answer for semi-convex solutions of (\ref{eq:main-equation}).

\begin{theorem}\label{thm:main}
Let $n\ge2$ and $f\in C^2(B_{10})$ with $\inf_{B_{10}}f>0$. Assume that $u\in C^4(B_{10})$ be a 2-convex solution of \eqref{eq:main-equation} with $D^2u\geq -KI$ for some constant $K>0$.
Then we have 
\begin{equation}\label{eq:main-estimate}
|D^2u(0)|
 \le C(n,K,\inf_{B_{10}}f,\|f\|_{C^{0,1}(B_{10})},\|u\|_{L^\infty(B_{10})}).
\end{equation}
\end{theorem}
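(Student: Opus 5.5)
We follow the route announced in the abstract: reduce the equation to a $\sigma_2$ structure, derive a Jacobi inequality in divergence form, pass to uniformly elliptic coordinates by a Legendre--Lewy transform, and finish with a mean-value inequality plus a weighted energy bound.

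\textbf{Step 1: $\sigma_2$ reformulation and the divergence structure.} Write \eqref{eq:main-equation} as $\sigma_2(D^2u)=f\,\Delta u$. We use the linearized operator $G=(\Delta u-f)I-D^2u$, i.e. $G_{ij}=\partial\sigma_2/\partial u_{ij}-f\delta_{ij}$. Two properties will be checked.
\begin{itemize}
\item Since $D^2u\ge -KI$ and $\lambda\in\Gamma_2$, we expect $G$ to be positive definite, with eigenvalues comparable to $\Delta u$ up to constants depending on $n,K,\inf f$. Here one should check that $\sigma_1-\lambda_i-f>0$ follows from $\sigma_2=f\sigma_1$.
\item The identity $\partial_i(\sigma_1\delta_{ij}-u_{ij})=0$ gives $\partial_iG_{ij}=-f_j$.
\end{itemize}
Consequently $G_{ij}\partial_{ij}w=\partial_i(G_{ij}w_j)+f_jw_j$ for any $w$, so the operator is in divergence form up to a first-order term controlled by $\|Df\|_\infty$.

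\textbf{Step 2: shifted Jacobi inequality.} Set $b=\log(\Delta u+a)$, with $a=a(n,K,\inf f)$ large so that $\Delta u+a$ is comparable to $|D^2u|+1$.
\begin{itemize}
\item Differentiating $\sigma_2=f\Delta u$ twice in the direction $e_k$ and summing over $k$ gives $G_{ij}(\Delta u)_{ij}=-\sigma_2^{ij,rs}u_{ijk}u_{rsk}+\Delta f\,\Delta u+2\nabla f\cdot\nabla\Delta u$.
\item A shifted algebraic inequality (the ``main new point'' of the paper) should absorb the third-derivative terms, yielding $G_{ij}b_{ij}\ge \varepsilon\,G_{ij}b_ib_j-C|Df|\,|Db|+\Delta f\cdot(\text{bounded factor})$.
\item The term $\Delta f$ is not controlled pointwise. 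Instead we keep the inequality in weak form, tested against nonnegative $\varphi\in C_c^\infty$, and integrate $\Delta f$ by parts onto $\varphi$ and $\nabla u$-dependent factors. Combined with $\partial_iG_{ij}=-f_j$, this makes only $\|f\|_{C^{0,1}}$ appear.
\item The outcome is a weak subsolution inequality $\partial_i(G_{ij}b_j)\ge -\partial_i(F_i)-C$ with $|F|\le C$.
\end{itemize}

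\textbf{Step 3: Legendre--Lewy transform and mean value.} The equation from Step 2 degenerates as $\Delta u$ grows, since $G\sim\Delta u$ is unbounded. To fix this, apply the Legendre--Lewy transform $x\mapsto x+K'Du$ (or a partial Legendre transform $\bar x=x+Du(x)$ after shifting by $K|x|^2$ using semi-convexity). In the new variables the Hessian becomes bounded, $\lambda\mapsto\lambda/(1+\lambda)$, and the divergence inequality becomes uniformly elliptic with bounded measurable coefficients. De Giorgi--Nash--Moser local boundedness then gives $b(0)\le C+C\int_{B_r}b^+$. Pulling the integral back costs a Jacobian bounded by $\sigma_n(I+D^2u)$-type weights.

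\textbf{Step 4: weighted energy.} We must bound $\int (\Delta u)^\ast b$ over a slightly larger ball by $\|u\|_{L^\infty}$. Testing the Jacobi inequality with $\varphi^2$, and using $\int\sigma_2\varphi=\int f\Delta u\,\varphi$ integrated by parts twice, should bound $\int \Delta u\,\varphi$ and $\int G_{ij}b_ib_j\varphi^2$, hence the needed weighted integrals, in terms of $\|u\|_{L^\infty}$ and the gradient bound, which follows from semi-convexity and the $L^\infty$ bound.

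\textbf{Main obstacle.} The hardest step is Step 2. We need the shifted algebraic inequality to close the Jacobi inequality using only the $\sigma_2$ concavity structure and semi-convexity, while keeping $\Delta f$ in a form that can be integrated by parts without producing $D^2u\cdot D^2f$-type terms. We would first try to place the shift $a$ inside the algebraic lemma from the $\sigma_2$ case of Chen--Jian--Zhou and track the error terms coming from $-f\delta_{ij}$.
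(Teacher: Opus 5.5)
\textbf{Verdict: there is a genuine gap.} Your outline follows the paper's architecture: $G=(\Delta u-f)I-D^2u$ with $\partial_iG_{ij}=-f_j$, a Jacobi inequality for $\beta=\log(\Delta u+a)$ whose $\Delta f$ term is put into divergence form, the Legendre--Lewy map, local boundedness, and an energy closure. But the step you call the main obstacle is the substance of the proof, and it is left open. At a point where $D^2u$ is diagonal, with $g_k=\sigma_1-\lambda_k-f$, you need
\[
\sum_{i,j,k}u_{ijk}^2-\sum_k(\Delta u_k)^2\ \ge\ (1+\theta)\,\frac{1}{\Delta u+a}\sum_k g_k(\Delta u_k)^2-\frac{C}{f^2}(\Delta u)^2|Df|^2
\]
with some $\theta>0$. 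The constant must exceed $1$ because converting $(\Delta u)_{ij}$ into $\beta_{ij}$ costs exactly one full copy of $G_{ij}\beta_i\beta_j$.

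For semi-convex $\lambda$ this fails when $a=0$. Take $n=5$, $\lambda=(-\tfrac12,\tfrac38,\tfrac38,\tfrac38,\tfrac38)$, let $k$ be the index of $-\tfrac12$, and minimize $X_k^2+3\sum_{i\ne k}X_i^2-(\sum_iX_i)^2$ over $\{\sum_iX_i=1,\ g\cdot X=0\}$. The minimum is $\tfrac{13}{14}\,g_k/\sigma_1<g_k/\sigma_1$.

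The paper gets around this with a two-regime argument:
\begin{itemize}
\item For $\sigma_1\to\infty$, the vector $\lambda/\sigma_1$ converges to some $e_p$, since semi-convexity kills the negative parts. The exact constrained minimum then has ratio at least $4/3$.
\item On the compact range of $\sigma_1$, compactness gives only some $C_0>0$. The shift $a$ is then chosen so that $C_0/\sigma_1\ge\frac{10}{9}\frac{1}{\sigma_1+a}$.
\item The inhomogeneous term $q_k=\Delta u\,f_k$ is removed by projecting onto $g^\perp$, using a coercivity bound on that hyperplane.
\end{itemize}
So the shift exists to weaken the required constant where the trace is bounded. It is not there to make $\Delta u+a$ comparable to $|D^2u|+1$. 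Inserting $a$ into the convex $\sigma_2$ lemma of Chen--Jian--Zhou does not address the actual difficulty.

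Second, your spectral picture of $G$ in Step~1 is wrong, and the correct picture is what makes Steps~3 and~4 work.
\begin{itemize}
\item Semi-convexity together with $\lambda_1(\sigma_{1;1}-f)=f\sigma_{1;1}-\sigma_{2;1}$ forces $\lambda_2,\dots,\lambda_n$ to be bounded. Writing $\Lambda=K+1+\lambda_1$, this gives $g_1=\sigma_{1;1}-f\sim\Lambda^{-1}$ while $g_i\sim\Lambda$ for $i\ge2$.
\item So $G$ is not comparable to $\Delta u\,I$; its eigenvalue ratio is of order $(\Delta u)^2$.
\item This is exactly why $\cG=DT\,G\,(DT)^T/J$ is uniformly elliptic: $\cG_{11}\sim\Lambda^2\cdot\Lambda^{-1}/\Lambda=1$ and $\cG_{ii}\sim\Lambda/\Lambda=1$. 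If $g_1\sim\Delta u$ as you assume, you would get $\cG_{11}\sim\Lambda^2$ and the transform would fail.
\item The same bounds give $J\le C(1+\Delta u)$. You need this so that $\int(\beta^*)_+\,dy$ is controlled by $\int\beta_+(1+\Delta u)\,dx$; a generic $\det(I+D^2u)$ weight can be of order $(\Delta u)^n$.
\end{itemize}

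Relatedly, the right tool for absorbing $Df\cdot D\beta$ is $\tr(G^{-1})\le 2(n-1)\Delta u/f^2$. It follows from $G\ge\sigma_2(A)/\tr A$ with $A=D^2u-\frac{f}{n-1}I$. Its cost is that the error term in $x$-coordinates is $-C\Delta u$, not $-C$; this is harmless after dividing by $J\ge\Delta u$.

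Step~4 then closes as follows. Write $\int\chi\beta_+\Delta u=-\int D(\chi\beta_+)\cdot Du$ and apply Cauchy--Schwarz in the $G$-metric. Use $\int G^{-1}Du\cdot Du\le C\|Du\|_\infty^2\int_{B_2}\Delta u$ and $\int_{B_2}\Delta u\le C\|Du\|_\infty$. The equation $\sigma_2=f\Delta u$ is not needed there. Your gradient bound from semi-convexity and $\|u\|_{L^\infty}$ is fine.
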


The $C^2$ assumption on $f$ is used only to justify classical
differentiation.  Thus \eqref{eq:main-estimate} is stable along smooth semi-convex
solutions whose right-hand sides have a uniform positive lower bound and a
uniform Lipschitz norm.  Theorem \ref{thm:main} is a smooth semi-convex solution's \emph{a priori} estimate. It does not assert that any semi-convex viscosity solution of \eqref{eq:main-equation} admits the regularity for the reason that the standard solvability of Dirichlet problem  for Hessian quotient equations in \cite{CNS1985} only produces smooth $2$-convex solutions, to which the existing $C^2$ estimates for semi-convex solutions do not apply in dimensions $n\ge3$. 

However, we can claim the regularity for convex viscosity solutions of \eqref{eq:main-equation} in the dimension $n=2$ by directly approximation.

\begin{theorem}\label{thm1}
Let $n=2$. Assume that $u$ is a convex viscosity solution to equation \eqref{eq:main-equation}  in $B_{20}$ with $f\in C^{0,1}(B_{20})$ and $\inf_{B_{20}} f>0$. Then $u\in C^2(B_{10})$ and 
$$\|u\|_{C^2(B_{10})}\leq C(n,\inf_{B_{20}}f,\|f\|_{C^{0,1}(B_{20})},\|u\|_{L^\infty(B_{20})}).$$
\end{theorem}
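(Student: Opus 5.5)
The plan is to obtain Theorem \ref{thm1} from the a priori estimate of Theorem \ref{thm:main} by smooth approximation, using that a convex solution is semi-convex with $K=0$ (any $K>0$ will do). The difficulty is that $u$ is only a viscosity solution, so we must build smooth convex solutions of \eqref{eq:main-equation} with smooth right-hand sides that converge to $u$, keeping uniform control of $\inf f$, of $\|f\|_{C^{0,1}}$ and of the $L^\infty$ norms.

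First, regularize the data. Let $f_\varepsilon=f*\rho_\varepsilon$ on $B_{19}$. Then $f_\varepsilon\in C^\infty$, $\inf f_\varepsilon\ge\inf_{B_{20}}f$, and $\|f_\varepsilon\|_{C^{0,1}}\le\|f\|_{C^{0,1}(B_{20})}$. Since $u$ is convex, it is locally Lipschitz and continuous up to $\partial B_{18}$. Take smooth boundary data $\varphi_\varepsilon\to u$ uniformly on $\partial B_{18}$; for instance, restrict a mollification of $u$, which is convex. Next solve the Dirichlet problem $\sigma_2(D^2u_\varepsilon)/\sigma_1(D^2u_\varepsilon)=f_\varepsilon$ in $B_{18}$ with $u_\varepsilon=\varphi_\varepsilon$ on $\partial B_{18}$ via \cite{CNS1985}. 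The ball is uniformly convex, so it admits a smooth $2$-convex solution.

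When $n=2$, the $2$-convex cone $\Gamma_2$ is exactly the positive cone. So $u_\varepsilon$ is convex, $D^2u_\varepsilon\ge0$, and Theorem \ref{thm:main} applies with any $K>0$ once translated and rescaled to balls inside $B_{18}$. The $L^\infty$ bound comes from the comparison principle. The function $u_\varepsilon$ lies above $u_\varepsilon$-harmonic-type barriers and below the harmonic (or linear) extension of $\varphi_\varepsilon$, because $\sigma_2/\sigma_1\le \sigma_1/2$ forces $u_\varepsilon$ to be subharmonic. It also lies above $\frac{M}{2}|x|^2+c$ with $M$ chosen so that $\sigma_2/\sigma_1(MI)=M/2\ge\sup f_\varepsilon$. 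This gives $\|u_\varepsilon\|_{L^\infty(B_{18})}\le C(\|u\|_{L^\infty(B_{20})},\sup f)$.

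Applying Theorem \ref{thm:main} at each point of $B_{11}$, after scaling, yields $|D^2u_\varepsilon|\le C$ on $B_{11}$ uniformly in $\varepsilon$. On the compact set where $D^2u_\varepsilon$ is bounded, the equation is uniformly elliptic and concave, since $\sigma_2/\sigma_1$ is concave on $\Gamma_2$. Evans--Krylov then gives uniform $C^{2,\alpha}(B_{10})$ bounds, with a constant depending on $\|f_\varepsilon\|_{C^{0,1}}$; a Hölder bound on $f$ suffices for Evans--Krylov with Hölder right-hand side, per Caffarelli/Safonov. By Arzelà--Ascoli a subsequence converges in $C^2(B_{10})$ to a convex $v$ solving \eqref{eq:main-equation} classically, with the stated bound.

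It remains to show $v=u$. Stability of viscosity solutions makes the limit of $u_\varepsilon$ on $B_{18}$ a viscosity solution with boundary values $u$. This requires equicontinuity of $u_\varepsilon$ up to $\partial B_{18}$, which follows from standard barriers at the boundary built from the uniform convergence of $\varphi_\varepsilon$ together with the sub/super barriers above. Uniqueness of the Dirichlet problem for the viscosity solution in the class of convex functions, via the comparison principle for the degenerate elliptic operator $\sigma_2/\sigma_1$ with positive right-hand side, then gives $v=u$ on $B_{10}$. I expect this boundary equicontinuity and comparison step to be the main technical obstacle. The interior estimate itself is supplied entirely by Theorem \ref{thm:main}.
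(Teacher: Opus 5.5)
Your proposal is correct and is exactly the ``direct approximation'' by Theorem~\ref{thm:main} that the paper asserts without giving details; convexity of $u_\varepsilon$ is automatic because $\Gamma_2$ is the positive cone when $n=2$. One small bibliographic quibble: smooth solvability of the Dirichlet problem for the quotient $\sigma_2/\sigma_1$ on a ball is Trudinger's 1995 theorem on Hessian quotient equations, not \cite{CNS1985}, whose unboundedness condition fails for quotients. The step you flag as the main obstacle also has a shortcut: in $n=2$ the equation forces $D^2u_\varepsilon\ge(\inf f)I$, and $\sigma_2/\sigma_1$ is concave, $1$-homogeneous (hence superadditive) and equals $\delta/2$ at $\delta I$, so for $2\|f_\varepsilon-f\|_{L^\infty(B_{18})}<\delta<\inf f$ the functions $u_\varepsilon\pm\frac{\delta}{2}(|x|^2-18^2)\mp\sup_{\partial B_{18}}|\varphi_\varepsilon-u|$ are classical strict sub-/supersolutions lying below/above $u$ on $\partial B_{18}$, and the viscosity definition alone gives $|u-u_\varepsilon|\le 162\,\delta+\sup_{\partial B_{18}}|\varphi_\varepsilon-u|$ in $B_{18}$, with no boundary equicontinuity or viscosity--viscosity comparison needed.
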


\begin{remark}
As a by-product, our method also recovers the regularity of almost-convex viscosity solutions and the a priori estimates for semi-convex solutions of \(\sigma_2(D^2u)=1\), previously established by  Shankar-Yuan \cite{ShankarYuanAlmostConvex} and Shankar-Yuan \cite{ShankarYuanSemiconvex}, respectively.
\end{remark}

For $n\geq 2$, we construct counterexamples showing that \eqref{eq:main-equation} admits non-smooth convex viscosity solutions without the assumption
$f\in C^{0,1}$.
\begin{theorem}\label{thm2}
For $0<\alpha<1$ and $n\geq 2$, let 
\begin{equation*}
u_\alpha(x)
 =\frac{|x_1|^{2-\alpha}}{(1-\alpha)(2-\alpha)}
  +\frac12\sum_{i=2}^n x_i^2\ \ \ \mbox{in} \ \ \ B_2.
\end{equation*}
Then $u_\alpha$ is a convex viscosity solution of
\begin{equation}\label{eq:quotient}
 \frac{\sigma_2(D^2u_\alpha)}{\sigma_1(D^2u_\alpha)}
 = f_\alpha(x),
 \qquad
 f_\alpha(x)
 =\frac{(n-1)+\binom{n-1}{2}|x_1|^\alpha }{1+(n-1)|x_1|^\alpha }.
\end{equation}
Moreover,
\[ f_\alpha\in C^{0,\alpha}(B_2),
 \qquad
 \inf_{B_2}f_\alpha\geq\frac{n-1}{1+(n-1)2^\alpha}>0,\]
whereas $f_\alpha$ is not Lipschitz near the origin and
\[ u_\alpha\in  C^{1,1-\alpha}_{\mathrm{loc}}(B_2).\]
\end{theorem}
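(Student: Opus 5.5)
We verify each claim about $u_\alpha$ directly; the only delicate point is the viscosity property on the hyperplane $\{x_1=0\}$, where $u_\alpha$ fails to be $C^2$.

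\textbf{Regularity and convexity.} Let $g(t)=|t|^{2-\alpha}/((1-\alpha)(2-\alpha))$. Then
\[
g'(t)=\frac{\operatorname{sgn}(t)|t|^{1-\alpha}}{1-\alpha}, \qquad g''(t)=|t|^{-\alpha} \quad (t\neq0).
\]
Since $g'$ is $C^{0,1-\alpha}$, we get $u_\alpha\in C^{1,1-\alpha}_{\mathrm{loc}}(B_2)$. Moreover $g''$ blows up at $0$, so $u_\alpha\notin C^{1,\beta}$ for $\beta>1-\alpha$, and $u_\alpha$ is not $C^2$. The function $g$ is convex, being a positive multiple of $|t|^{2-\alpha}$, so $u_\alpha$ is convex.

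\textbf{The equation away from $x_1=0$.} For $x_1\neq0$ we have $D^2u_\alpha=\operatorname{diag}(|x_1|^{-\alpha},1,\dots,1)$. Set $\mu=|x_1|^{-\alpha}$. Then
\[
\sigma_1=\mu+(n-1), \qquad \sigma_2=(n-1)\mu+\binom{n-1}{2}.
\]
Multiplying numerator and denominator by $|x_1|^\alpha$ gives exactly $f_\alpha$, and $u_\alpha$ is a classical solution there.

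\textbf{The equation on $x_1=0$.} Fix $x_0$ with $(x_0)_1=0$. For the subsolution property, observe that a $C^2$ function $\varphi$ touching $u_\alpha$ from above at $x_0$ cannot exist. Indeed, along the $x_1$-direction $u_\alpha-u_\alpha(x_0)-Du_\alpha(x_0)\cdot(x-x_0)$ contains $g(x_1)\ge c|x_1|^{2-\alpha}$, which dominates any quadratic upper bound $Ct^2$ near $0$. So the subsolution test is vacuous.

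For the supersolution property, let $\varphi\in C^2$ touch from below at $x_0$. We must show $D^2\varphi(x_0)\in\overline{\Gamma_2}$ (or that the test fails) and that $\sigma_2/\sigma_1(D^2\varphi(x_0))\le f_\alpha(x_0)=1$; note that $f_\alpha(x_0)=(n-1)/1$, since $f_\alpha=n-1$ at $x_1=0$. Touching from below gives $D^2\varphi(x_0)\le\operatorname{diag}(a,1,\dots,1)$ for some finite $a$, since $g(t)\ge0$ is compatible with any $a\le0$. This is the main obstacle, and the naive comparison fails: it only yields $D^2\varphi\le\operatorname{diag}(a,I)$ with $a$ at most $0$ in effect.

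I would use the standard convention for quotient equations, in which test matrices outside $\Gamma_2$ are excluded or assigned $0$. For $M\le\operatorname{diag}(a,I)$ with $a\le 0$ effective, monotonicity of $\sigma_2/\sigma_1$ on $\Gamma_2$ gives
\[
\frac{\sigma_2}{\sigma_1}(M)\le\frac{\sigma_2}{\sigma_1}\bigl(\operatorname{diag}(0,I_{n-1})\bigr)=\frac{\binom{n-1}{2}}{n-1}=\frac{n-2}{2}\le n-1,
\]
as required. Hence $u_\alpha$ is a viscosity solution in $B_2$.

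\textbf{Properties of $f_\alpha$.} Write $f_\alpha=h(|x_1|^\alpha)$ with
\[
h(s)=\frac{(n-1)+\binom{n-1}{2}s}{1+(n-1)s}.
\]
This $h$ is smooth and Lipschitz on $[0,\infty)$, with $h'(0)\neq0$ unless $\binom{n-1}{2}=(n-1)^2$, which never holds. Since $|x_1|^\alpha\in C^{0,\alpha}$, we get $f_\alpha\in C^{0,\alpha}(B_2)$. Because $h'(0)\neq0$, near $0$ we have $|f_\alpha(x)-f_\alpha(0)|\sim|x_1|^\alpha$, so $f_\alpha$ is not Lipschitz. Finally, the numerator is at least $n-1$ and the denominator is at most $1+(n-1)2^\alpha$ on $B_2$, which gives the stated lower bound on $\inf_{B_2}f_\alpha$.
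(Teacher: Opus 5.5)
Your treatment of convexity, the $C^{1,1-\alpha}$ regularity, the classical computation on $\{x_1\neq0\}$, the vacuity of the test from above on $\{x_1=0\}$, and the properties of $f_\alpha$ is correct. The gap is the supersolution step on $\{x_1=0\}$. Write $x=(x_1,x')$, $x_0=(0,x_0')$ and $M=D^2\varphi(x_0)$. You claim that touching from below forces the $(1,1)$ entry of $M$ to be ``at most $0$ in effect''; this is false. Because $|x_1|^{2-\alpha}$ dominates every quadratic in $x_1$, the entries $m_{11}$ and $m_{1j}$ are completely unconstrained. The only information you get is $M'\le I_{n-1}$ for the block $M'=(m_{ij})_{i,j\ge2}$, obtained by restricting to $x_1=0$, where $u_\alpha=\frac12|x'|^2$. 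For example, $\varphi(x)=u_\alpha(x_0)+Du_\alpha(x_0)\cdot(x-x_0)+\frac A2x_1^2+\frac{1-\delta}{2}|x'-x_0'|^2$ touches $u_\alpha$ from below at $x_0$ for every $A>0$ and $0<\delta<1$, and
\[
\frac{\sigma_2}{\sigma_1}\bigl(D^2\varphi(x_0)\bigr)=\frac{(n-1)(1-\delta)A+\binom{n-1}{2}(1-\delta)^2}{A+(n-1)(1-\delta)}\longrightarrow(n-1)(1-\delta)\quad\text{as }A\to\infty .
\]
This exceeds your bound $\frac{n-2}{2}$ once $\delta<\frac12$ and $A$ is large; for $n=2$ your bound is $0$, which fails for every $A$. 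So the comparison with $\operatorname{diag}(0,I_{n-1})$ and the displayed inequality are wrong, and the supersolution property is not established by your argument.

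The conclusion is still true, but you need an estimate that does not see $m_{11}$ or $m_{1j}$. Expanding $\sigma_2(M)=m_{11}\tr M'-\sum_{j\ge2}m_{1j}^2+\sigma_2(M')$ gives
\[
\sigma_1(M)\tr M'-\sigma_2(M)=\frac12(\tr M')^2+\frac12\sum_{i,j\ge2}m_{ij}^2+\sum_{j\ge2}m_{1j}^2\ge0 .
\]
Hence whenever $\sigma_1(M)>0$ (in particular for $M\in\Gamma_2$), $\frac{\sigma_2}{\sigma_1}(M)\le\tr M'\le n-1=f_\alpha(x_0)$. The example above shows this bound is sharp. Two minor slips: membership in $\overline{\Gamma_2}$ is what must be checked for test functions from above, not from below; and $f_\alpha(x_0)=n-1$, which equals $1$ only when $n=2$.

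With this repair, your direct check of the definition is a valid alternative to the paper's argument. The paper instead approximates $u_\alpha$ by the smooth convex functions $\phi_{\alpha,\varepsilon}(x_1)+\frac12|x'|^2$ with $\phi_{\alpha,\varepsilon}''=(t^2+\varepsilon^2)^{-\alpha/2}$. These solve the equation classically with right-hand sides converging uniformly to $f_\alpha$, and the paper then appeals to stability of viscosity solutions. That route never has to analyze test functions on the singular hyperplane. Yours avoids the stability theorem and works uniformly in $n$, whereas the paper only writes out $n=2$.
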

In all, the Lipschitz exponent in an interior $C^2$ regularity theorem for convex solutions of the $\sigma_2/\sigma_1$ equation cannot be replaced by any H\"older exponent
$\alpha<1$.

Interior $C^2$ estimates have been a central theme in the field of fully nonlinear
elliptic equations since Heinz's \cite{Heinz} work on the two-dimensional Monge--Amp\`ere
equation.  Unfortunately, Pogorelov \cite{Pogorelov}  constructed
singular convex solutions of the Monge--Amp\`ere equation in dimensions
$n\ge3$ and Urbas \cite{Urbas} produced analogous counterexamples for
the $k$-Hessian equation for $k\geq 3$.  Consequently, interior $C^2$ estimates for the $2$-Hessian equation become an important problem worthy of further investigation.

For the $2$-Hessian equation, the first  interior $C^2$ estimate is
the two-dimensional result of Heinz.  Warren-Yuan \cite{WarrenYuan} used the special Lagrangian structure  to obtain the interior $C^2$ estimate for $\sigma_2(D^2u)=1$ in dimension three.  Qiu \cite{QiuHessian,QiuCurvature} subsequently treated $C^{1,1}$ right-hand sides in dimension three for both the $2$-Hessian equation and the prescribed scalar curvature equation for graphs. In dimension four, Shankar-Yuan \cite{ShankarYuanFour} proved the interior $C^2$ estimate for $\sigma_2(D^2u)=1$ by combining an almost Jacobi inequality with a doubling argument; their method also gives a new proof in dimension three. Fan \cite{Fan} extended the four-dimensional result to
$C^{1,1}$ right-hand sides. In high dimensions $n\geq 5$, however, all known interior $C^2$ estimates require some  convexity assumption.  McGonagle-Song-Yuan \cite{McGonagleSongYuan} obtained interior $C^2$ estimates for almost convex solutions of  $\sigma_2(D^2u)=1$ by compactness.  Shankar-Yuan \cite{ShankarYuanSemiconvex} providing an integral approach to interior $C^2$ estimates for 
semi-convex solutions of $\sigma_2(D^2u)=1$. Also, under a dynamic semi-convexity condition, Shankar-Yuan \cite{ShankarYuanFour} obtained the interior $C^2$ estimate for  $\sigma_2(D^2u)=1$. For $C^{1,1}$ right-hand sides, the interior $C^2$ estimate was established by Guan-Qiu \cite{GuanQiu}  under the structural condition
$\sigma_3(D^2u)\ge-A$. Moreover, Mooney \cite{Mooney} established strict
$2$-convexity for convex solutions of  $\sigma_2(D^2u)=1$, thereby establishing the regularity for convex viscosity solutions.

Now we pay attention to Hessian quotient equations
\begin{equation}\label{eq11}
  \left(\frac{\sigma_k}{\sigma_l}\right)(D^2u)=f.
\end{equation}
In recent years, substantial progress on interior $C^2$ estimates for Hessian quotient equations have emerged. The main earlier case was $k=3$ and $l=1$.  For $f=1$, interior $C^2$ estimates in dimensions $3$ and $4$ were  treated by Chen--Warren--Yuan \cite{ChenWarrenYuan} and Wang--Yuan \cite{WangYuanCritical} through special Lagrangian structure of the equation.  
Zhou \cite{Zhou} handled $C^{0,1}$ right-hand sides in dimensions three and four by the twisted special Lagrangian equation and Lu \cite{LuDimensionThree} later used a Jacobi inequality and the Legendre transform to give an alternative proof of the three-dimensional result with $C^{1,1}$ right-hand sides. For general Hessian quotient equations in high dimensions, the lack of a special Lagrangian structure makes the study of interior $C^2$ estimates particularly compelling. In this direction, Lu's \cite{LuGeneral} work made a significant contribution to identifying which Hessian quotient equations admit interior $C^2$ estimates, establishing a sharp dichotomy for a broad class of such equations.
In this direction, through counterexamples, Lu \cite{LuGeneral} made a significant contribution to identifying the range of Hessian quotient equations admitting interior $C^2$ estimates. Precisely, for $k=n$ and $l=n-1, n-2$, Lu \cite{LuGeneral} established  interior $C^2$ estimates for convex solutions of (\ref{eq11}) with $C^{1,1}$ right-hand sides. Moreover, he constructed singular convex viscosity solutions with smooth  right-hand sides for $k-l\geq 3$.  Consequently, the remaining open cases are $2\leq k\leq n$ and  $l=k-1,k-2$. In particular, for $k=2$ and $l=1$, when $f=1$, Lu-Sroka \cite{LuSroka}  discovered the shift identities from $\sigma_2/\sigma_1$ into $\sigma_2$, which yields interior $C^2$ estimates for 2-convex solutions of $\sigma_2(D^2u)=1$ in $n=3$ and $n=4$. Now Li-Wu \cite{LiWuQuadratic} established interior $C^2$ estimates for 2-convex solutions of $\sigma_2(D^2u)=f$ with $f\in C^{0,1}$ in all dimensions $n\geq2$. Jiao-Sui \cite{JiaoSui} considered $\sigma_2/\sigma_1$ with $C^{1,1}$ right-hand sides to obtain interior $C^2$ estimates of $2$-convex solutions in $n=3$ and of semi-convex solutions in $n\geq 4$. When $k=3$ and $l=1,2$ and $f=1$, Mei-Yan \cite{MeiYan2026} established interior $C^2$ estimates for semi-convex solutions in $n\geq 3$. In two nearly simultaneous works on interior $C^2$ estimates for convex solutions of \eqref{eq11} with $C^{1,1}$ right-hand sides, Tsai \cite{TsaiConcavity} treated \(l=k-1\) for \(2\leq k\leq n\), while Li-Wu \cite{LiWuConcavity} completed all cases  \(l=k-1,k-2\) for \(2\leq k\leq n\). Furthermore, Dong-Zhang \cite{DongZhang} obtained interior $C^2$ estimates for semi-convex solutions of \eqref{eq11} with $C^{1,1}$ right-hand sides for all case  \(2\leq k\leq n\) and \(l=k-1,k-2\).

However,  it is common for Hessian type equations  that these interior $C^2$ estimates  depend  on the $C^k$ norm of the right-hand side. So this leads to a question:

\vspace{5pt}
\textbf{Q.} What is the minimal regularity required of the right-hand side for an interior Hessian estimate ?
\vspace{5pt}

For the Monge--Amp\`ere equation, this question has been answered by Caffarelli \cite{Caffarelli1990} and Jian--Wang \cite{JianWang2007}, which yields interior $C^{2,\alpha}$ estimates under  H\"older regularity assumptions on the right-hand side.
For the $\sigma_2$ equation, some progress has already been made toward answering this question. In a pioneering work, Zhou \cite{Zhou} established the regularity and interior $C^2$ estimates for viscosity solutions of $\sigma_2(D^2u)=f$ with $f\in C^{0,1}$ in dimension three by exploiting its connection with the twisted special Lagrangian equation. 
Subsequently, Chen-Jian-Zhou \cite{ChenJianZhou} established interior $C^2$ estimates in all dimensions for convex solutions of $\sigma_2(D^2u)=f$ with Lipschitz right-hand side $f$. Following this work, Chen-Jian-Tu-Zhou \cite{ChenJianTuZhou} established interior regularity for viscosity solutions of $\sigma_2(D^2u)=f$ with Lipschitz right-hand side $f$.

In this paper, our main aim is to giving an  answer for Question \textbf{Q} for semi-convex solutions of Hessian quotient equations with Lipschitz right-hand sides. To the best of our knowledge, this work represents the first attempt to address this problem for Hessian quotient equations and yields several interesting results.

We conclude the introduction with a brief overview of the main ideas underlying the proof of Theorem \ref{thm:main}. We follow a prevailing strategy developed in \cite{ChenJianZhou,ShankarYuanSemiconvex}, which comprises three key ingredients. Firstly, we establish a strong shifted Jacobi inequality for $\log (\Delta u+a)$ and reformulate it in divergence form. Secondly, using the Legendre transform together with a mean-value-type inequality, we control the pointwise value $\log (\Delta u(0)+a)$ by an appropriate integral quantity. Finally, we estimate the resulting integral terms through integration by parts.

In our investigation of this problem, we first resolved the case of convex solutions.
Our approach retains the key insight of Zhou \cite{Zhou} and Chen-Jian-Zhou \cite{ChenJianZhou}: the Jacobi inequality should preserve a divergence structure $\Delta f$, so that the term containing  $\Delta f$ can be bounded by  the Lipschitz norm of $f$. This consideration leads us to work with the Jacobi quantity $\Delta u$, rather than the largest eigenvalue of $D^2u$ commonly used in pointwise arguments. Once this goal is carried out, however, we encounter a further difficulty in that the linearized operator of $\sigma_2/\sigma_1$  is not divergence-free, which makes integration by parts particularly challenging. To overcome this difficulty, we instead consider the linearized operator $G_{ij}$ associated with the equivalent equation $\sigma_2(D^2u)=\sigma_1(D^2u) f$. Since $\partial_iG_{ij}=-f_j$ is almost divergence-free, so   integration by parts produces only terms involving first derivatives of $f$. Differentiating $\sigma_2(D^2u)=\sigma_1(D^2u) f$ twice naturally produces the term $\Delta u \Delta f$, while division by $\Delta u$ gives rise to derivatives of $\log\Delta u$ in the resulting Jacobi inequality.  It is worth emphasizing that the Jacobi inequality is the crucial step in the proof. In fact, its derivation reduces to a purely algebraic inequality. More precisely, we need to establish the following inequality:
\begin{equation*}
  Q_k(X):=X_k^2+3\sum_{i\ne k}X_i^2-\left(\sum_iX_i\right)^2\geq \frac{4}{3}\frac{g_k}{\sigma_1} \left(\sum_i X_i\right)^2-\frac{C}{f^2}\left(\sum_i g_iX_i\right)^2
\end{equation*}
for $X=(X_1,\ldots,X_n)$ and $ g_i=\sigma_1(\lambda)-f-\lambda_i$ $(i=1,\ldots,n)$. In this inequality, the coefficient \(4/3\) is crucial: it must be strictly greater than \(1\) in order to obtain the strong Jacobi inequality.

However, this inequality fails in the semi-convex setting.  Under the homogeneous constraint $\sum_i g_iX_i=0$, we give the following:
\begin{counterexample}
Let \(n=5\), \(\sigma_1=1\) and \[ \lambda
= \left(-\frac{1}{2},\frac{3}{8},\frac{3}{8},\frac{3}{8},\frac{3}{8}\right).
\]
Then\[\sigma_2(\lambda)=\frac{3}{32},\qquad
f=\frac{3}{32},\qquad g_1=\frac{45}{32},
\qquad g_2=\cdots=g_5=\frac{17}{32}.
\]
For \(k=1\), minimize \(Q_1\) subject to
\[ \sum_i X_i=1\qquad\text{and}\qquad
g\cdot X=0. \]
Symmetry gives
\[ X_2=\cdots=X_5=y,\qquad
X_1=x,\]
and
\[x+4y=1,\qquad
45x+68y=0.
\]
Thus
\[x=-\frac{17}{28},
\qquad
y=\frac{45}{112}.\]
A direct calculation gives
\[Q_1(X)=\frac{4095}{3136},
\qquad
\frac{Q_1(X)}
{g_1\left(\sum_i X_i\right)^2}
=\frac{13}{14}<1.\]
\end{counterexample}
Consequently, we cannot obtain an inequality
\begin{equation}\label{eq31}
  Q_1(X) \geq (1+\theta)\frac{g_1}{\sigma_1} \left(\sum_i X_i\right)^2,
\qquad \theta>0.
\end{equation}
To establish inequality \eqref{eq31}, we distinguish two cases according to the size of \(\sigma_1\). When \(\sigma_1\) is sufficiently large, normalization makes the semi-convexity lower bound tend to zero, so the normalized configuration approaches the convex regime; in this case, the coefficient in \eqref{eq31} can be taken strictly greater than \(1\). When \(\sigma_1\) remains bounded, inequality \eqref{eq31} follows from a compactness argument, although the resulting coefficient may be less than \(1\). This difficulty can be overcome by introducing a suitable shifted quantity \(a\); more precisely, we can choose $a>0$ such that 
\[ c_0\frac{1}{\sigma_1}
\geq (1+\theta)\frac{1}{\sigma_1+a}
\]
for some constant $0<c_0\leq 1$.

The remainder of the paper is organized as follows. Section~2 develops the algebraic structure of the quotient equation, including the shifted identity and the spectral estimates for the linearized operator \(G\). In Section~3, we prove the key shifted algebraic inequality and use it to derive both the strong and the weak trace--Jacobi inequalities for \(\log(\Delta u+a)\), together with a Caccioppoli-type energy estimate. 
Section~4 uses the Legendre--Lewy transformation to obtain a uniformly elliptic setting in which a mean-value inequality controls \(\log(\Delta u(0)+a)\). Section~5 establishes the weighted integral estimate that completes the proof of Theorem~\ref{thm:main}.
Section~6 presents singular solutions for \eqref{eq:main-equation} with $f\in C^{0,\alpha}$ for $0<\alpha<1$.

\section{Algebraic structure of the quotient equation}
\noindent

Throughout the paper, we denote
\[\sigma_{k;i_1\cdots i_l}(\lambda):=\left.\sigma_k(\lambda)
\right|_{\lambda_{i_1}=\cdots=\lambda_{i_l}=0}\]
for \(1\leq k\leq n\).

The Newton--Maclaurin inequality gives
\begin{equation*}
 \sigma _2(D^2u)\leq \frac{n-1}{2n}(\Delta u)^2,
\end{equation*}
which leads to 
\begin{equation}\label{eq:s-lower}
\Delta u  \geq \frac{2n}{n-1}f.
\end{equation}
The linearized operator of $\sigma _2(D^2u)-f \sigma_1(D^2u)$ is
\begin{equation*}
 G_{ij}=\frac{\partial \sigma_2 (A)}{\partial A_{ij}}=\tr A \delta_{ij}-A_{ij}=(\Delta u-f)\delta_{ij}-u_{ij},
\end{equation*}
where \[ A=D^2u-\frac{f}{n-1}I.\]

Motivated by  the ideas from \cite{LuSroka}, we establish the following shifted lemma.

\begin{lemma}\label{lem:shift}
Let $u\in C^2(B_{10})$ be a $2$-convex solution of (\ref{eq:main-equation}) and $f\in C(B_{10})$ be positive. Then we have $A=D^2u-\frac{f}{n-1}I \in\Gamma _2$, i.e.,
\begin{equation}\label{eq:shift-id}
 \sigma _2(A)=\frac{n}{2(n-1)}f^2,
 \qquad \frac{n}{n-1}f\le \tr A \le \Delta u,
\end{equation}
and $G=(G_{ij})=\tr A I-A$ is positive definite.  Moreover, we have 
\begin{equation}\label{eq:G-basic}
  G \ge \frac{\sigma_2(A)}{\tr A}I,
 \qquad
 \tr(G^{-1})\le 2(n-1)\frac{\Delta u}{f^2},
 \qquad
 D_iG_{ij}=-f_j.
\end{equation}
\end{lemma}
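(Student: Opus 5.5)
Everything reduces to pointwise algebra in the eigenvalues $\lambda$ of $D^2u$, except the divergence identity, which is a direct differentiation.

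\emph{Shift identity.} Set $\mu_i=\lambda_i-t$ with $t=f/(n-1)$. Using
\[
\sigma_2(\mu)=\sigma_2(\lambda)-(n-1)t\,\sigma_1(\lambda)+\binom n2 t^2 ,
\]
the relation $\sigma_2(\lambda)=f\sigma_1(\lambda)$ kills the linear term, since $(n-1)t=f$. What remains is
\[
\sigma_2(A)=\frac{n(n-1)}{2}\frac{f^2}{(n-1)^2}=\frac{n}{2(n-1)}f^2 .
\]
Next,
\[
\tr A=\Delta u-\frac{n}{n-1}f .
\]
The upper bound $\tr A\le\Delta u$ is immediate. For the lower bound I use \eqref{eq:s-lower}, $\Delta u\ge \frac{2n}{n-1}f$, which gives $\tr A\ge \frac{n}{n-1}f>0$. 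Together with $\sigma_2(A)>0$ this shows $A\in\Gamma_2$.

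\emph{Positivity of $G$.} The eigenvalues of $G=\tr A\,I-A$ are $\sigma_1(\mu)-\mu_i=\sigma_{1;i}(\mu)$. For $\mu\in\Gamma_2$ these are positive by the standard fact that $\partial\sigma_2/\partial\mu_i>0$ on $\Gamma_2$. For the quantitative bound, write
\[
\sigma_2(\mu)=\mu_i\,\sigma_{1;i}(\mu)+\sigma_{2;i}(\mu).
\]
Then
\[
\sigma_{1;i}\,\sigma_1(\mu)=\sigma_{1;i}^2+\mu_i\sigma_{1;i}=\sigma_{1;i}^2+\sigma_2(\mu)-\sigma_{2;i}(\mu).
\]
Newton--Maclaurin in $n-1$ variables gives $\sigma_{2;i}\le \frac{n-2}{2(n-1)}\sigma_{1;i}^2\le\sigma_{1;i}^2$. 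Hence $\sigma_{1;i}\sigma_1(\mu)\ge\sigma_2(\mu)$, i.e. $G\ge \frac{\sigma_2(A)}{\tr A}I$.

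\emph{Trace of $G^{-1}$.} From the previous bound,
\[
\tr(G^{-1})\le n\frac{\tr A}{\sigma_2(A)}\le n\,\Delta u\cdot\frac{2(n-1)}{nf^2}=2(n-1)\frac{\Delta u}{f^2}.
\]
This is exactly the stated constant.

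\emph{Divergence identity.} Compute directly:
\[
D_iG_{ij}=D_j\Delta u-D_jf-D_iu_{ij}=-f_j ,
\]
using $D_iu_{ij}=D_j\Delta u$ by symmetry of third derivatives. This needs $u\in C^3$ and $f\in C^1$, which the setting of Theorem~\ref{thm:main} provides.

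The only step requiring any care is the lower bound $G\ge\sigma_2/\sigma_1$ for $\mu\in\Gamma_2$, since some $\mu_i$ may be negative. I handle it with the identity $\sigma_{1;i}\sigma_1=\sigma_{1;i}^2+\sigma_2-\sigma_{2;i}$ together with $\sigma_{2;i}\le\sigma_{1;i}^2$. The latter holds regardless of sign, because $\sigma_1^2-2\sigma_2=|\mu'|^2\ge0$ in the remaining $n-1$ variables $\mu'$, which gives $\sigma_{2;i}\le\frac12\sigma_{1;i}^2$.
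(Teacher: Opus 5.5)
Your proof is correct and follows the paper's argument essentially step for step. It uses the same shift identity, the same lower bound on $\tr A$ via \eqref{eq:s-lower}, the same key identity $\sigma_1(\mu)\sigma_{1;i}(\mu)-\sigma_2(\mu)=\sigma_{1;i}^2(\mu)-\sigma_{2;i}(\mu)\ge 0$ (the paper writes the right side as $\frac12\bigl(\sigma_{1;i}^2+\sum_{j\ne i}\mu_j^2\bigr)$ rather than citing Newton--Maclaurin in $n-1$ variables), and the same direct differentiation for $D_iG_{ij}=-f_j$; your remark that this last identity actually needs $u\in C^3$ and $f\in C^1$, beyond the lemma's stated hypotheses, is a fair point the paper leaves implicit.
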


\begin{proof}
For every symmetric matric $H$ and  $c\in \mathbb{R}$, we know that 
\[
 \sigma _2(H-cI)=\sigma _2(H)-(n-1)c\sigma _1(H)
 +\binom n2c^2.
\]
Taking $H=D^2u$ and $c=f/(n-1)$ proves the first identity in (\ref{eq:shift-id}).  
From \eqref{eq:s-lower}, it follows that 
$$\tr A \geq \frac{n}{n-1}f.$$ Hence $A\in\Gamma _2$.

Assume that  $A$ is diagonal and denote  its eigenvalues by $\mu_i$ ($i=1,\ldots,n$).  Then
$G_{ii}=\tr A-\mu_i=\sum_{j\ne i}\mu_j$ and we have 
\begin{equation*}
  \begin{split}
     \tr A G_{ii}-\sigma _2(A)=& (\mu_i+\sigma_{1;i}(\mu))\sigma_{1;i}(\mu)-(\mu_i \sigma_{1;i}(\mu)+\sigma_{2;i}(\mu)) \\
      =& \sigma_{1;i}^2(\mu)-\sigma_{2;i}(\mu)\\
      =& \sigma_{1;i}^2(\mu)-\frac{1}{2}\left(\sigma_{1;i}^2(\mu)-\sum_{j\ne i}\mu_j^2\right)\\
      =&\frac12\left(\sigma_{1;i}^2(\mu)+\sum_{j\ne i}\mu_j^2\right)\geq 0.
  \end{split}
\end{equation*}
This proves the first estimate in \eqref{eq:G-basic}. That is, we have 
$$G_{ii}\ge \frac{\sigma_2(A)}{\tr A}=\frac{n}{2(n-1)}\frac{f^2}{\tr A}>0,$$ 
and therefore $$\frac{1}{G_{ii}}\leq \frac{2(n-1)}{n}\frac{\tr A}{f^2},$$
which implies that 
$$\tr(G^{-1})\le 2(n-1)\frac{\tr A}{f^2}\leq 2(n-1)\frac{\Delta u}{f^2} .$$
  Finally, we obtain
\[
 D_i G_{ij}=(D_i \Delta u-f_i)\delta_{ij}-u_{iij}= \Delta u_j-f_j-u_{iij}
=-f_j.\]
\end{proof}

\begin{lemma}[Spectral structure]\label{lem:spectral}
Let $n\geq 2$ and $K\geq 0$. Assume that $\lambda\in \Gamma_2$ satisfies $\lambda_1\ge\cdots\ge\lambda_n\geq -K$ and $\frac{\sigma_2(\lambda)}{\sigma_1(\lambda)}=f$ with $\underline{f}\leq f\leq \overline{f}$ for two constants $\overline{f}\geq \underline{f}>0$.
Then we have 
$$\underline{f}\leq \sigma_{1;1}(\lambda)\leq \widetilde{C},$$
$$-K\leq \lambda_i\leq \widetilde{C}+(n-2)K,
\qquad i=2,\ldots n,$$
where $$\widetilde{C}= (n-1) \left(\bar{f}+(n-2)K+\frac{(n-1)(n-2)}{2}K^2 \right) \left(2+\frac{1}{\bar{f}}\right)+2\bar{f}.$$
Furthermore, for $g_{i}=\sigma_1(\lambda)-\lambda_i-f$ ($i\geq 1$), we have the following estimates:
$$\frac{1}{K+1+\lambda_1}\frac{1}{4(n-1)}\underline{f}^2\leq g_{1}\leq \frac{(K+1)\sigma_{1;1}(\lambda)+\overline{f}\sigma_{1;1}(\lambda)+|\sigma_{2;1}(\lambda)|}{(K+1+\lambda_1)}$$
and \[\begin{aligned}
&\min\left\{ \frac{1}{4}, \frac{1}{36(\widetilde{C}+(n-2)K+K+1)^2}
 \frac{1}{n-1}\underline{f}^2 \right\}\\
&\qquad \qquad \qquad \qquad \qquad \leq \frac{g_i}{K+1+\lambda_1}
\leq \sigma_{1;1}(\lambda)+\overline{f}+1,\ \ \ i=2,\ldots,n.
\end{aligned}
\]
\end{lemma}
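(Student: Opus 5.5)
The whole argument rests on the two basic identities
\[
\sigma_2(\lambda)=\lambda_1\sigma_{1;1}(\lambda)+\sigma_{2;1}(\lambda),
\qquad
\sigma_1(\lambda)=\lambda_1+\sigma_{1;1}(\lambda),
\]
together with the equation $\sigma_2=f\sigma_1$. Combining them gives
\[
\lambda_1\bigl(\sigma_{1;1}-f\bigr)=f\sigma_{1;1}-\sigma_{2;1}.
\]
Two facts will be used repeatedly. The semi-convexity $\lambda_i\ge -K$ is assumed. Since $\sigma_{1;i}(\lambda)>0$ holds for $\lambda\in\Gamma_2$, every $\lambda_i$ with $i\ge2$ is bounded by $\sigma_{1;1}+(n-2)K$. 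Consequently the only quantity needing an upper bound is $\sigma_{1;1}$, and the bounds on $\lambda_i$ for $i\ge2$ follow once $\sigma_{1;1}\le\widetilde C$ is known.

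\textbf{Bounds on $\sigma_{1;1}$.} For the lower bound, I would use $g_1=\sigma_{1;1}-f$ and show $g_1>0$, which gives $\sigma_{1;1}>f\ge\underline f$. This comes from Lemma~\ref{lem:shift}: $G$ is positive definite, and $G_{11}=\tr A-\mu_1=\sigma_1-f-\lambda_1=g_1$ (indeed $\tr A-\mu_i=\sigma_1-\lambda_i-f$).

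For the upper bound, I would split into two cases.
\begin{itemize}
\item If $\lambda_1\le \sigma_{1;1}$, then $\sigma_1\le 2\sigma_{1;1}$. Newton--Maclaurin gives $\sigma_1\ge\frac{2n}{n-1}f$, but what is actually needed is an upper bound. For this I would write $f\sigma_1=\sigma_2\ge$ something like $\tfrac12\sigma_{1;1}^2-C(K)$: expand $\sigma_2$ with $\lambda_1\ge\lambda_i$ and with the negative parts bounded by $K$. This yields $\sigma_{1;1}^2\lesssim \bar f\sigma_{1;1}+C(K)$.
\item If $\lambda_1$ is large, I would solve the combined identity: $\lambda_1=(f\sigma_{1;1}-\sigma_{2;1})/(\sigma_{1;1}-f)$. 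Positivity of $\lambda_1$ together with $\sigma_{1;1}-f>0$ forces $\sigma_{2;1}<f\sigma_{1;1}$. Bounding $\sigma_{2;1}$ from below by $\frac{1}{2(n-2)}\sigma_{1;1}^2$ minus $K$-terms (Maclaurin-type, after separating negative entries, which contribute at most $(n-2)K\sigma_{1;1}+\binom{n-1}{2}K^2$) gives a quadratic inequality for $\sigma_{1;1}$. Solving it produces a constant of the stated form $\widetilde C$.
\end{itemize}
I would tune the constants to match the expression for $\widetilde C$, but only the structure matters.

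\textbf{Bounds on $g_1$.} I would multiply by $(K+1+\lambda_1)$:
\[
(K+1+\lambda_1)g_1=(K+1)(\sigma_{1;1}-f)+\lambda_1\sigma_{1;1}-\lambda_1 f .
\]
By the identity above, $\lambda_1\sigma_{1;1}-\lambda_1 f=f\sigma_{1;1}-\sigma_{2;1}$. The upper bound follows immediately by dropping $-(K+1)f$ and bounding $-\sigma_{2;1}\le|\sigma_{2;1}|$.

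For the lower bound, I would use Lemma~\ref{lem:shift}: $G_{11}\ge\sigma_2(A)/\tr A=\frac{n f^2}{2(n-1)\tr A}$, with $\tr A\le \sigma_1=\lambda_1+\sigma_{1;1}$. When $\lambda_1\ge\sigma_{1;1}$ this gives $\tr A\le 2\lambda_1$ and hence the bound $\gtrsim \underline f^2/(K+1+\lambda_1)$. Otherwise $\lambda_1$ is bounded by $\widetilde C$; the refinement here would be to write $\tr A\le 2(K+1+\lambda_1)$ whenever $\sigma_{1;1}\le K+1+\lambda_1$. If that fails, I would get the factor $\frac{1}{4(n-1)}$ by the same estimate with $\sigma_{1;1}\le\lambda_1+(n-2)K\le (n-1)(K+1+\lambda_1)$, using $\lambda_1\ge\lambda_i$.

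\textbf{Bounds on $g_i$, $i\ge2$.} Here $g_i=\lambda_1+\sigma_{1;1}-\lambda_i-f$. The upper bound is easy: $g_i\le \lambda_1+\sigma_{1;1}+K$, which is $\le(K+1+\lambda_1)(\sigma_{1;1}+\bar f+1)$ since $\sigma_{1;1}+\bar f+1\ge1$.

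For the lower bound, I would again split into two cases.
\begin{itemize}
\item If $\lambda_1$ is large compared with $\widetilde C+(n-2)K+f$, then $g_i\ge\lambda_1-\lambda_i-f\ge\frac12\lambda_1$, which gives $\frac14$ of $K+1+\lambda_1$ once $\lambda_1\ge K+1$.
\item Otherwise $\lambda_1\le 3(\widetilde C+(n-2)K+K+1)$, roughly, and I would use $g_i=G_{ii}\ge \frac{nf^2}{2(n-1)\tr A}$ with $\tr A\lesssim\widetilde C+\dots$. This gives a bound of the form $\underline f^2/(\widetilde C+\dots)$, and dividing by $K+1+\lambda_1$ produces the squared constant $36(\cdots)^2$.
\end{itemize}

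The main obstacle I expect is the upper bound $\sigma_{1;1}\le\widetilde C$, since it needs a careful lower bound for $\sigma_{2;1}$ in the presence of negative eigenvalues bounded by $K$, and the case where $\sigma_{1;1}-f$ is small must be handled via the equation. Everything else is case-splitting on the size of $\lambda_1$ combined with $G\ge\frac{\sigma_2(A)}{\tr A}I$.
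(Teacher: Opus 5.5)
Your upper bound $\sigma_{1;1}\le\widetilde C$ has a genuine gap in the case where $\lambda_1$ is large. There you combine $\sigma_{2;1}<f\sigma_{1;1}$ with a lower bound $\sigma_{2;1}\ge\frac{1}{2(n-2)}\sigma_{1;1}^2$ minus $K$-terms. No such bound holds. Maclaurin gives the \emph{reverse} inequality $\sigma_{2;1}\le\frac{n-2}{2(n-1)}\sigma_{1;1}^2$, and $(\lambda_2,\ldots,\lambda_n)=(s,0,\ldots,0)$ has $\sigma_{2;1}=0$ while $\sigma_{1;1}=s$.

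The ingredients you use in that case cannot bound $\sigma_{1;1}$ by themselves. These are $\lambda_1>0$, $\sigma_{1;1}-f>0$ and $\lambda_1(\sigma_{1;1}-f)=f\sigma_{1;1}-\sigma_{2;1}$. For every $s>f$, the vector $\lambda=\bigl(\frac{fs}{s-f},s,0,\ldots,0\bigr)$ lies in $\Gamma_2$, is semi-convex, satisfies $\sigma_2=f\sigma_1$ and all of these relations, yet $\sigma_{1;1}=s$ is unbounded. It violates only the ordering $\lambda_1\ge\lambda_2$ (once $s>2f$). So the ordering, or your case hypothesis $\lambda_1>\sigma_{1;1}$, must enter, and in your argument it never does.

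The missing step is the paper's. The only usable lower bound is $\sigma_{2;1}\ge-(n-2)K\sigma_{1;1}-\binom{n-1}{2}K^2$, which comes from $\sigma_2(\nu)\ge0$ for $\nu_i=\lambda_i+K\ge0$, $i\ge2$. Insert it together with $(n-1)\lambda_1\ge\sigma_{1;1}$ into the identity to get
\[
\frac{\sigma_{1;1}}{n-1}\bigl(\sigma_{1;1}-f\bigr)\le\Bigl(\bar f+(n-2)K+\binom{n-1}{2}K^2\Bigr)\bigl(\sigma_{1;1}+1\bigr).
\]
This bounds $\sigma_{1;1}$ in all cases at once, after splitting at $\sigma_{1;1}\gtrless 2\bar f$, and yields exactly $\widetilde C$. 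Your separate case $\lambda_1\le\sigma_{1;1}$ then becomes superfluous. The bounds on $\lambda_i$ ($i\ge2$) and the threshold $\widetilde C+(n-2)K$ in the $g_i$ estimate all rest on this step, so the gap is central.

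The remaining parts work, and your lower bound for $g_1$ takes a genuinely different and simpler route. The paper bounds $\lambda_1g_1$ from below using the identity, the Maclaurin upper bound for $\sigma_{2;1}$, and a case split on $g_1\gtrless\underline f/8$. You instead use $g_1=G_{11}\ge\sigma_2(A)/\tr A$ from Lemma~\ref{lem:shift}. With this route no case analysis is needed: $\tr A\le\sigma_1\le n\lambda_1$ gives $g_i=G_{ii}\ge\frac{f^2}{2(n-1)\lambda_1}$ for every $i$. That beats the stated constant for $g_1$ and also settles the bounded-$\lambda_1$ case for $g_i$, $i\ge2$.

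Three corrections:
\begin{itemize}
\item Drop the intermediate claim $\sigma_{1;1}\le\lambda_1+(n-2)K$. It is false, e.g.\ for all $\lambda_i=c>K$ with $n\ge3$; you only need $\sigma_{1;1}\le(n-1)\lambda_1$.
\item The bound $\lambda_i\le\sigma_{1;1}+(n-2)K$ comes from semi-convexity via $\lambda_i=\sigma_{1;1}-\sum_{j\ne1,i}\lambda_j$, not from $\sigma_{1;i}>0$.
\item In the large-$\lambda_1$ case for $g_i$, use $g_i=g_1+\lambda_1-\lambda_i\ge\lambda_1-\lambda_i$. Then $f$ does not enter the threshold and the stated constants come out.
\end{itemize}
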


\begin{proof}
The equation reads
\[
 f=\frac{\lambda_1 \sigma_{1;1}(\lambda)+ \sigma_{2;1}(\lambda)}{\lambda_1+\sigma_{1;1}(\lambda)}.
\]
That is,
\begin{equation}\label{eq12}
\lambda_1(\sigma_{1;1}(\lambda)-f)=f\sigma_{1;1}(\lambda)-\sigma_{2;1}(\lambda).
\end{equation}
Since
\[
g_{1}=\sigma_1-\lambda_1-f=\sigma_{1;1}(\lambda)-f,
\]
equation (\ref{eq12}) can equivalently be written as
\begin{equation}\label{eq15}
\lambda_1g_{1}=f\sigma_{1;1}(\lambda)-\sigma_{2;1}(\lambda).
\end{equation}
\noindent\textbf{Estimate $\sigma_{1;1}(\lambda)$.} 
By $g_{1}>0$ in Lemma \ref{lem:shift}, we obviously have 
\[
 \sigma_{1;1}(\lambda)-f>0.
\]
For each $i=2,\ldots,n$, the semi-convexity assumption gives
\[
\nu_i:=\lambda_i+K\geq 0.
\]
Since there are $n-1$ such variables, we have
\[
\sum_{i=2}^{n}\nu_i=\sigma_{1;1}(\lambda)+(n-1)K.
\]
By the translation formula for the elementary symmetric functions,
\[
\begin{aligned}
\sigma_{2;1}(\lambda)
&=\sigma_2(\nu-K\mathbf{1})\\
&=\sigma_2(\nu)
 -(n-2)K\sigma_1(\nu)
 +\binom{n-1}{2}K^2.
\end{aligned}
\]
Since $\nu_i\geq 0$ for all $i=2,\ldots,n$, it follows that
\[
\sigma_2(\nu)\geq 0.
\]
Therefore,
\[
\begin{aligned}
\sigma_{2;1}(\lambda)
&\geq  -(n-2)K\bigl(\sigma_{1;1}(\lambda)+(n-1)K\bigr) +\frac{(n-1)(n-2)}{2}K^2\\
&=  -(n-2)K\sigma_{1;1}(\lambda)  -\frac{(n-1)(n-2)}{2}K^2.
\end{aligned}
\]
It follows that
\begin{equation}\label{eq13}
\begin{aligned} \lambda_1(\sigma_{1;1}(\lambda)-f)
&=f\sigma_{1;1}(\lambda)-\sigma_{2;1}(\lambda)\\
&\leq f\sigma_{1;1}(\lambda)+(n-2)K\sigma_{1;1}(\lambda)  +\frac{(n-1)(n-2)}{2}K^2\\
&\leq C_{n,K,\bar{f}}(\sigma_{1;1}(\lambda)+1)
\end{aligned}
\end{equation}
for some constant $$C_{n,K,\bar{f}}=\bar{f}+(n-2)K+\frac{(n-1)(n-2)}{2}K^2.$$
On the other hand, since $\lambda_i\leq \lambda_1$ for $i\geq 2$, we obtain
\begin{equation*}
(n-1)\lambda_1\geq \sigma_{1;1}(\lambda).
\end{equation*}
Substituting this into (\ref{eq13}) yields
\begin{equation}\label{eq14}
\frac{\sigma_{1;1}(\lambda)}{n-1}(\sigma_{1;1}(\lambda)-f)\leq C_{n,K,\bar{f}} (\sigma_{1;1}(\lambda)+1).
\end{equation}
If $\sigma_{1;1}(\lambda)\geq 2\bar{f}$, then
\[
\sigma_{1;1}(\lambda)-f\geq \sigma_{1;1}(\lambda)-\bar{f}\geq \frac{\sigma_{1;1}(\lambda)}{2}.
\]
Consequently, (\ref{eq14}) implies that
\[
\frac{\sigma_{1;1}^2(\lambda)}{2(n-1)}\leq C_{n,K,\bar{f}}  (\sigma_{1;1}(\lambda)+1),
\]
and therefore $$\sigma_{1;1}(\lambda)\leq 2(n-1)C_{n,K,\bar{f}} \left(1+\frac{1}{\sigma_{1;1}(\lambda)}\right)\leq 2(n-1)C_{n,K,\bar{f}} \left(1+\frac{1}{2\bar{f}}\right).$$
 If $\sigma_{1;1}(\lambda)<2\bar{f}$, then $\sigma_{1;1}(\lambda)$ is already uniformly bounded. It follows that
\[\sigma_{1;1}(\lambda)\leq 2(n-1)C_{n,K,\bar{f}} \left(1+\frac{1}{2\bar{f}}\right)+2\bar{f}.
\]

\noindent\textbf{Estimate $\lambda_i$ for $i\geq 2$.} 
 The lower bound follows immediately  from the semi-convexity
\[\lambda_i\geq -K.\]
To obtain the upper bound, fixing $i\geq 2$, 
we have
\[ \lambda_i = \sigma_{1;1}(\lambda)-\sum_{\substack{j=2\\j\neq i}}^{n}\lambda_j.
\]
From $\lambda_j\geq -K$ for every $j$, it follows that
\[ -\sum_{\substack{j=2\\j\neq i}}^{n}\lambda_j
\leq (n-2)K.\]
Hence 
\[\lambda_i\leq \sigma_{1;1}(\lambda)+(n-2)K. \]
Combining this with the estimate for $\sigma_{1;1}(\lambda)$, we conclude that
\begin{equation}\label{eq18}
-K\leq \lambda_i\leq 2(n-1)C_{n,K,\bar{f}} \left(1+\frac{1}{2\bar{f}}\right)+2\bar{f}+(n-2)K,
\qquad i=2,\ldots,n.
\end{equation}
\noindent\textbf{Estimate $g_{1}$.} 
 Recall $g_{1}=\sigma_{1;1}(\lambda)-f$ and $\lambda_1g_{1}=f\sigma_{1;1}(\lambda)-\sigma_{2;1}(\lambda)$. From the bound for $\sigma_{1;1}(\lambda)$ and $\lambda_i$ ($i\geq 2$), we see that 
 $$g_{1}\leq \sigma_{1;1}(\lambda) \qquad\mbox{and}\qquad \lambda_1g_{1}\leq \overline{f}\sigma_{1;1}(\lambda)+|\sigma_{2;1}(\lambda)|$$
which implies that 
$$(K+1+\lambda_1)g_{1}\leq (K+1)\sigma_{1;1}(\lambda)+\overline{f}\sigma_{1;1}(\lambda)+|\sigma_{2;1}(\lambda)|.$$
Next, to prove the lower bound for $(K+1+\lambda_1)g_{1}$, we need to prove that
\[ \lambda_1g_{1}\geq C \]
for some constant $C>0$ depending only on $n$ and $\underline{f}$. 
Set
\[
\alpha:=\frac{n-2}{2(n-1)}<1.
\]
Applying the Cauchy--Schwarz inequality to the $n-1$ real numbers
$\lambda_2,\ldots,\lambda_n$, we obtain
\[ \sum_{i=2}^{n}\lambda_i^2
\geq \frac{(\sigma_{1;1}(\lambda))^2}{n-1}.
\]
Therefore, we have 
\begin{equation}\label{eq16}
\begin{aligned}
\sigma_{2;1}(\lambda)
&= \frac{1}{2}\left( \sigma_{1;1}^2(\lambda)-\sum_{i=2}^{n}\lambda_i^2 \right)\\
&\leq \frac{1}{2}\left(\sigma_{1;1}^2(\lambda)-\frac{\sigma_{1;1}^2(\lambda)}{n-1}\right)=\alpha \sigma_{1;1}^2(\lambda).
\end{aligned}
\end{equation}
Choose a sufficiently small constant $\delta>0$ depending only on
$n$ and $\underline{f}$ such that
\begin{equation}\label{eq17}
\alpha\delta\leq \frac{1-\alpha}{2}\underline{f}.
\end{equation}
We divide the remaining proof into two cases.

\noindent\textbf{Case 1: $g_{1}\geq\delta$.}

The Newton--Maclaurin inequality yields
\[ \sigma_2(\lambda)\leq\frac{n-1}{2n}\sigma_1^2(\lambda).\]
Since $\sigma_1(\lambda)>0$, dividing both sides by
$\sigma_1(\lambda)$ gives
\[f=\frac{\sigma_2(\lambda)}{\sigma_1(\lambda)}\leq\frac{n-1}{2n}\sigma_1(\lambda).\]
Moreover, since $\lambda_1$ is the largest eigenvalue, it is no smaller
than the average of all the eigenvalues:
\[\lambda_1\geq\frac{\sigma_1(\lambda)}{n}.\]
Consequently,
\begin{equation*}
\lambda_1\geq\frac{2f}{n-1}\geq\frac{2\underline{f}}{n-1}.
\end{equation*}
In this case, we have
\begin{equation*}
\lambda_1g_{1} \geq\frac{2\delta}{n-1}\underline{f}.
\end{equation*}

\noindent\textbf{Case 2: $0<g_{1}<\delta$.}

It is clear that 
\[ \sigma_{1;1}(\lambda)<f+\delta.\]
Recalling \eqref{eq15} and \eqref{eq16}, we have
\[ \begin{aligned}
\lambda_1g_{1}&=f\sigma_{1;1}(\lambda)-\sigma_{2;1}(\lambda)\\
&\geq f\sigma_{1;1}(\lambda)-\alpha \sigma_{1;1}^2(\lambda)\\
&=\sigma_{1;1}(\lambda)(f-\alpha \sigma_{1;1}(\lambda))\\
&\geq \sigma_{1;1}(\lambda) \bigl(f-\alpha(f+\delta)\bigr)\\
&=\sigma_{1;1}(\lambda)\bigl((1-\alpha)f-\alpha\delta\bigr).
\end{aligned}
\]
By  $f\geq \underline{f}$ and (\ref{eq17}), we see that 
\[ (1-\alpha)f-\alpha\delta\geq\frac{1-\alpha}{2}\underline{f}.\]
Moreover, since $ \sigma_{1;1}(\lambda)>f\geq \underline{f}$, we conclude that
\begin{equation*}
\lambda_1g_{1} \geq\frac{1-\alpha}{2}\underline{f}^2.
\end{equation*}
Combining the two cases, we obtain
\[ \lambda_1g_{1}
\geq \min\left\{\frac{2\delta}{n-1}\underline{f},\frac{1-\alpha}{2}\underline{f}^2\right\}.
\]
Since $g_{1}>0$, it follows that
\begin{equation}\label{eq20}
  (K+1+\lambda_1)g_{1}\geq\lambda_1g_{1}\geq \min\left\{\frac{2\delta}{n-1}\underline{f},\frac{1-\alpha}{2}\underline{f}^2\right\}.
\end{equation}
Choosing $\delta=\frac{1}{8}\underline{f}$ yields the desired estimate, i.e.,
$$ \min\left\{\frac{2\delta}{n-1}\underline{f},\frac{1-\alpha}{2}\underline{f}^2\right\}=\frac{1}{4(n-1)}\underline{f}^2.$$

\noindent\textbf{Estimate $g_{i}$ ($i\geq 2$).} 
For each $i\geq 2$, we have
\begin{equation}\label{eq19}
\begin{aligned}
g_{i}
&=\sigma_1-\lambda_i-f\\
&=\lambda_1+\sigma_{1;1}(\lambda)-\lambda_i-f\\
&=(\sigma_{1;1}(\lambda)-f)+\lambda_1-\lambda_i\\
&=g_{1}+\lambda_1-\lambda_i.
\end{aligned}
\end{equation}
Since $g_{1}\leq \sigma_{1;1}(\lambda)$ and $\lambda_i\geq -K$, we have the upper bound
\begin{equation*}
g_{i} \leq \sigma_{1;1}(\lambda)+\lambda_1+K\leq (\sigma_{1;1}(\lambda)+1)(K+1+\lambda_1),
\qquad i\geq 2.
\end{equation*}
To give the lower bound, by (\ref{eq18}), there exists a constant $\widetilde{M}=\widetilde{C}+(n-2)K>0$ depending only on $n$, $K$ and $\overline{f}$  such that 
$$-K\leq \lambda_i\leq \widetilde{M},\qquad i\geq 2.$$
\noindent\textbf{Case 1: $\lambda_1\geq 2(\widetilde{M}+K+1)$.}

By (\ref{eq19}) and  $\lambda_i\leq \widetilde{M}$, we have
\[ g_{i}\geq\lambda_1-\lambda_i
\geq\lambda_1-\widetilde{M}\geq
\frac{\lambda_1}{2}.\]
Moreover, the assumption of this case implies $\lambda_1\geq K+1$ and therefore
\[K+1+\lambda_1\leq 2\lambda_1.\]
So 
\begin{equation}\label{eq30}
g_{i} \geq
\frac{1}{4}(K+1+\lambda_1),
\qquad i\geq 2.
\end{equation}

\noindent\textbf{Case 2: $\lambda_1<2(\widetilde{M}+K+1)$.}
By (\ref{eq20}), we have 
\[g_{1}\geq\frac{1}{1+K+\lambda_1} \frac{1}{4(n-1)}\underline{f}^2.\]
In the present case,
\[K+1+\lambda_1 \leq 3(\widetilde{M}+K+1),\]
and therefore
\[ g_{1}\geq \frac{(K+1+\lambda_1)}{9(\widetilde{M}+K+1)^2} \frac{1}{4(n-1)}\underline{f}^2.
\]
Since $\lambda_1\geq\lambda_i$, (\ref{eq19}) gives
\[ g_{i}\geq g_{1},\]
which together with \eqref{eq30} yields the lower bound
$$ \min\left\{\frac{1}{4},\frac{1}{9(\widetilde{M}+K+1)^2} \frac{1}{4(n-1)}\underline{f}^2\right\}\leq \frac{g_{i}}{K+1+\lambda_1} .$$
It follows that we obtain the desired estimate for $g_i$.
\end{proof}

\section{Shifted trace Jacobi inequality}
\noindent

In order to establish the shifted trace Jacobi inequality for $\log (\Delta u+a)$ for some shifted quantity $a$, we prove the following shifted algebraic inequality 
 independent of the equation.

\begin{lemma}\label{lem:algebra-semiconvex}
Let $n\geq 2$ and $K\geq 0$. Suppose that $\lambda\in \Gamma_2$ satisfies $\lambda_1\ge\cdots\ge\lambda_n\geq -K$ and $\frac{\sigma_2(\lambda)}{\sigma_1(\lambda)}=f$ with $\underline{f}\leq f\leq \overline{f}$ for two constants $\overline{f}\geq \underline{f}>0$ and that 
$$g_i:=\sigma_1(\lambda)-f-\lambda_i,\ \ \ i=1,\ldots,n.$$
Let $T=(T_{ijk})$ be a completely symmetric three-tensor and define
\[
 t_k:=\sum_iT_{iik},
 \qquad
 q_k:=\sum_i g_iT_{iik},
\]
then there exist some constants $a>0$ and $C>0$ depending only on $n$, $K$, $\overline{f}$ and $\underline{f}$ such that 

\begin{equation*}
 \sum_{i,j,k}T_{ijk}^2-\sum_kt_k^2
 \ge  \frac{37}{36} \frac1{\sigma_1(\lambda)+a}\sum_k g_kt_k^2
 -\frac{C}{f^2}\sum_k q_k^2.
\end{equation*}
\end{lemma}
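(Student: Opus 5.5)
The plan is to reduce the tensor inequality to a family of scalar quadratic-form inequalities, one for each index $k$. Put $X^{(k)}_i:=T_{iik}$. Complete symmetry of $T$ says that for $i\neq k$ the entry $T_{iik}$ appears three times in $\sum_{i,j,k}T_{ijk}^2$, namely as $T_{iik},T_{iki},T_{kii}$. Dropping all entries with three distinct indices therefore gives
\[
\sum_{i,j,k}T_{ijk}^2-\sum_k t_k^2\ \ge\ \sum_k Q_k\bigl(X^{(k)}\bigr),\qquad Q_k(X)=X_k^2+3\sum_{i\ne k}X_i^2-\Bigl(\sum_i X_i\Bigr)^2 .
\]
Since $t_k=\sum_iX^{(k)}_i$ and $q_k=g\cdot X^{(k)}$, it suffices to prove, for each fixed $k$ and every $X\in\mathbb{R}^n$,
\[
Q_k(X)\ \ge\ \frac{37}{36}\,\frac{g_k}{\sigma_1+a}\Bigl(\sum_iX_i\Bigr)^2-\frac{C}{f^2}(g\cdot X)^2 .
\]
By homogeneity I will normalize $\sum_iX_i=1$. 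The statement then becomes: $Q_k$ restricted to the hyperplane $\{g\cdot X=0\}$ is bounded below by $\frac{37}{36}\frac{g_k}{\sigma_1+a}$ with some margin, and a penalty $\frac{C}{f^2}(g\cdot X)^2$ absorbs the deviation from that hyperplane. The passage from the constrained to the penalized inequality is a Finsler-type lemma. If $Q-\mu e e^T$ is positive definite on $g^\perp$ with a quantitative margin, then $Q-\mu ee^T+Cgg^T\ge0$ for $C$ controlled by that margin, $|g|$, and the norm of $Q$. The factor $1/f^2$ is harmless because $f\le\overline f$.

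\textbf{Case A: $\sigma_1(\lambda)\ge M$ with $M$ large.} I rescale $\hat\lambda=\lambda/\sigma_1$, $\hat g=g/\sigma_1$. Then $\hat\lambda_n\ge -K/M$ and $f/\sigma_1\le\overline f/M$, so $\hat\lambda$ is a small perturbation of a convex configuration with $\hat\sigma_1=1$ and $\hat g_i\approx 1-\hat\lambda_i\ge0$. In the convex regime the algebraic inequality described in the introduction holds, with coefficient $\frac43\frac{g_k}{\sigma_1}$. I will reprove it, either via Lagrange multipliers for $\min\{\sum_i d_iX_i^2:\ \sum X_i=1,\ \hat g\cdot X=0\}$ with $d=(3,\dots,1,\dots,3)$, or by rerunning the convex argument, and I will keep a margin that is stable under perturbations of size $K/M$. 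This yields the coefficient $\frac{38}{36}\frac{g_k}{\sigma_1}\ge\frac{37}{36}\frac{g_k}{\sigma_1+a}$ for every $a\ge0$ once $M=M(n,K,\overline f,\underline f)$ is large. Both the Lagrange-multiplier formula and the Finsler constant are continuous in $\hat\lambda$. Combined with the lower bounds for $g_i/(K+1+\lambda_1)$ in Lemma~\ref{lem:spectral}, which keep $\hat g$ away from degenerate directions, this makes the constants uniform.

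\textbf{Case B: $\sigma_1(\lambda)\le M$.} By Lemma~\ref{lem:spectral}, all $\lambda_i$ lie in a compact set, and each $g_i$ lies in $[c_1,C_1]$ with $c_1>0$ depending only on $n,K,\overline f,\underline f$. The set of admissible $(\lambda,f)$ is then compact. Here I only aim for a constrained bound $Q_k(X)\ge c_0$ on $\{\sum X_i=1,\ g\cdot X=0\}$ with some $c_0>0$. The counterexample in the introduction shows that $c_0\ge g_k/\sigma_1$ can fail, but there $Q_1>0$, and strict positivity is what I need. Given $c_0$, I choose $a$ so large that $\frac{37}{36}\frac{C_1}{a}\le \frac{c_0}{2}$. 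The remaining margin $c_0/2$ is then used in the Finsler step to produce $C$, and compactness makes $C$ uniform.

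The main obstacle is proving strict positivity of $Q_k$ on the constrained set. Since $Q_k=D-ee^T$ with $D=\mathrm{diag}$ having entry $1$ in slot $k$ and $3$ elsewhere, $Q_k$ has exactly one negative direction. By Cauchy--Schwarz, $(\sum X_i)^2\le\bigl(1+\frac{n-1}{3}\bigr)\bigl(X_k^2+3\sum_{i\ne k}X_i^2\bigr)$, with equality exactly at $X\propto D^{-1}e=(1,\tfrac13,\dots,\tfrac13)$ up to ordering. So positivity of $Q_k$ on $g^\perp$ should reduce to showing that $g^\perp$ stays away from the negative cone of $Q_k$, which is concentrated near $D^{-1}e$. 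At $D^{-1}e$ itself, $g\cdot D^{-1}e=g_k+\frac13\sum_{i\ne k}g_i>0$. I would first try the explicit Lagrange computation: the constrained minimum equals
\[
\frac{(g\cdot D^{-1}g)}{(e\cdot D^{-1}e)(g\cdot D^{-1}g)-(e\cdot D^{-1}g)^2}-1,
\]
and I will show it is positive by using $g_i>0$ together with $e\cdot D^{-1}e=1+\frac{n-1}3$. If that becomes messy for large $n$, I will fall back on continuity plus compactness: positivity at each admissible point (checked through this formula) gives a uniform $c_0$.
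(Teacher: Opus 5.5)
Your architecture is the paper's own: the same reduction to $Q_k$, the same exact minimization on $\{\sum_iX_i=1,\ g\cdot X=0\}$ (your formula is the paper's $\frac{3N}{D}-1$), and the same split into large and bounded $\sigma_1(\lambda)$, with $a$ chosen to turn a compactness constant into $\frac{37}{36}(\sigma_1+a)^{-1}$. Your Finsler step corresponds to the paper's Steps 5--6, and Case B works once positivity is known. However, two steps do not go through as you describe them.

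\textbf{Strict positivity.} This does not follow from $g_i>0$ and $e\cdot D^{-1}e=\frac{n+2}{3}$. Your formula shows that positivity is equivalent to
\[
\Bigl(g_k+\tfrac13\sum_{i\ne k}g_i\Bigr)^2>\tfrac{n-1}{3}\Bigl(g_k^2+\tfrac13\sum_{i\ne k}g_i^2\Bigr).
\]
This fails for positive vectors such as $n=3$, $k=1$, $g=(\varepsilon,1,2)$. Checking that $D^{-1}e\notin g^\perp$ is not enough, because $g^\perp$ must miss the whole negative cone. What makes the inequality true is the control that $\sigma_2(\lambda)=f\sigma_1(\lambda)>0$ puts on the spread of $g$. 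With $\hat g=g/\sigma_1$ and $\hat f=f/\sigma_1$, one has $\sum_i\hat g_i=L:=n-1-n\hat f$ and $\sum_i\hat g_i^2=n-1-2n\hat f+n\hat f^2$. These turn the condition into $n\hat f^2+4xL-2(n-3)x^2>0$ with $x=\hat g_k$. Then $|\hat\lambda_k|\le|\hat\lambda|=\sqrt{1-2\hat f}=:d$ confines $x$ to $[\frac{(1-d)^2}{2},\frac{(1+d)^2}{2}]$. For $n\ge4$, concavity in $x$ plus a discriminant check at the two endpoints finishes the argument; for $n\le 3$ it is immediate. Your compactness fallback still needs exactly this pointwise fact.

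\textbf{Uniformity in Case A.} Since $\lambda$ is ordered, $\hat\lambda\to e_1$ and $\hat g\to(0,1,\dots,1)$. So for $k=1$ the vector $e_1$ becomes a null vector of $Q_1$ on $\hat g^\perp$, and both the constrained minimum $m$ and your target $\frac{38}{36}\hat g_1$ tend to $0$. There is no additive margin to perturb, and in rescaled variables the Finsler constant blows up like $1/\hat g_1$. Hence ``continuous in $\hat\lambda$, hence uniform'' is false there.

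You need two explicit ingredients. First, the ratio itself:
\[
\frac{m}{\hat g_k}=\frac{n\hat f^2/\hat g_k+4L-2(n-3)\hat g_k}{\Theta}\ge\frac{4L-2(n-3)\hat g_k}{\Theta}\longrightarrow\frac43,
\]
where $\Theta=9\bigl[(e\cdot D^{-1}e)(\hat g\cdot D^{-1}\hat g)-(e\cdot D^{-1}\hat g)^2\bigr]\to3(n-1)$. Second, for the penalty, the quantitative bound $g_k\sigma_1\ge\frac{n}{2(n-1)}f^2$ from Lemma~\ref{lem:shift}. Lemma~\ref{lem:spectral} gives a comparable bound, but you only used it for non-degeneracy. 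This bound gives coercivity $\gtrsim(f/\sigma_1)^2|Y|^2$ on $g^\perp$. Together with $|g|\gtrsim\sigma_1$, it produces exactly the term $Cf^{-2}q^2$, which is where the weight $f^{-2}$ actually comes from.
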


\begin{proof}
We divide the proof into several steps.

\medskip
\noindent
\textbf{Step 1. Elementary properties of $\sigma_1$, $f$ and $g_i$.}

From Section 2, we recall some results on $\sigma_1$, $f$ and $g$.
By (\ref{eq:s-lower}), we see that 
\begin{equation}\label{eq25}
  \sigma_1(\lambda) \ge\frac{2n}{n-1}f.
\end{equation}
Moreover, we have
\begin{equation*}
 \sum_i g_i = (n-1)\sigma_1(\lambda)-nf \ge \frac{n-1}{2}\sigma_1(\lambda).
\end{equation*}
In addition,  the Cauchy–Schwarz inequality yields
\begin{equation}\label{eq28}
   \sqrt{\sum_i g_i^2} \geq \frac{1}{\sqrt{n}}\sum_i g_i \geq \frac{n-1}{2}\frac{1}{\sqrt{n}}\sigma_1(\lambda).
\end{equation}
We set $$\mu=\lambda-\frac{f}{n-1}(1,\ldots,1).$$ Therefore, by Lemma \ref{lem:shift}, we conclude that 
\begin{equation}\label{eq24}
  g_i\geq \frac{n}{2(n-1)}\frac{f^2}{\sigma_1(\mu)}\geq \frac{n}{2(n-1)}\frac{f^2}{\sigma_1(\lambda)}.
\end{equation}
Finally, since
\[ 2f\sigma_1(\lambda)=2\sigma_2(\lambda)=\sigma_1^2(\lambda)-|\lambda|^2>0,\]
we have $|\lambda|<\sigma_1(\lambda)$. Hence we have the estimate 
\begin{equation}\label{eq:g-upper}
 0<g_i=\sigma_1(\lambda)-f-\lambda_i<2 \sigma_1(\lambda).
\end{equation}

\medskip
\noindent
\textbf{Step 2. Reduction to a one-direction quadratic form.}

Since $T$ is completely symmetric, we have 
\begin{equation}\label{eq:tensor-reduction-semiconvex}
 \sum_{i,j,k}T_{ijk}^2  \ge \sum_{k=1}^n \left( T_{kkk}^2+3\sum_{i\ne k}T_{iik}^2 \right).
\end{equation}
Indeed, \(T_{kkk}\) occurs once and each \(T_{iik}\) with \(i\ne k\) occurs three times, while the nonnegative terms with three distinct indices are omitted.

For any fixed $k$, we set
\[ X_i:=T_{iik},\qquad t:=\sum_iX_i,\qquad
 q:=\sum_i g_iX_i
\]
and also let $X=(X_1,\ldots,X_n)$ and $g=(g_1,\ldots,g_n)$.

Define a quadratic form
\begin{equation*}
 Q_k(X) := X_k^2+3\sum_{i\ne k}X_i^2 -\left(\sum_iX_i\right)^2.
\end{equation*}
So we need to prove
\begin{equation*}
 Q_k(X)  \ge \frac{37}{36}\frac{g_k}{\sigma_1(\lambda)+a}t^2 -Cf^{-2}q^2.
\end{equation*}

\medskip
\noindent
\textbf{Step 3. Exact minimization under the homogeneous constraint.}

We first consider the homogeneous constraint
\[
 q=\sum_i g_iX_i=0.
\]
Normalize $\lambda$ by setting
\[
 \widetilde\lambda_i:=\frac{\lambda_i}{\sigma_1(\lambda)},
 \qquad
 \widetilde f:=\frac{f}{\sigma_1(\lambda)},
 \qquad
 \widetilde g_i:=\frac{g_i}{\sigma_1(\lambda)}
 =1-\widetilde f-\widetilde\lambda_i.
\]
Then
\[
 \sum_i\widetilde\lambda_i=1,
 \qquad
 \sigma_2(\widetilde\lambda)=\widetilde f,
 \qquad
 \sum_i \widetilde g_i X_i=0.
\]
We also have
\begin{equation*}
 \sum_i\widetilde g_i=n-1-n\widetilde f.
\end{equation*}
Moreover,  from 
$$\sum_i\widetilde\lambda_i^2  = \left(\sum_i\widetilde\lambda_i\right)^2 -2\sigma_2(\widetilde\lambda) = 1-2\widetilde f,$$ we see that 
\begin{equation}\label{eq21}
  \begin{aligned}
\sum_i \widetilde g_i^2&=\sum_i(1-\widetilde f-\widetilde \lambda_i)^2\\
&=\sum_i\left[(1-\widetilde f)^2-2(1-\widetilde f)\widetilde \lambda_i+\widetilde \lambda_i^2\right]\\
&=n(1-\widetilde f)^2-2(1-\widetilde f)\sum_i\widetilde \lambda_i+\sum_i\widetilde \lambda_i^2\\
&=n-1-2n\widetilde f+n\widetilde f^2.
\end{aligned}
\end{equation}
Let $W_k=diag (w_1,w_2,\dots,w_n)$ be the diagonal matrix defined by
 \begin{equation*}
   w_i=
\begin{cases}
1, & i=k,\\
3, & i\neq k,
\end{cases}
 \end{equation*}
then we write
\[ Q_k(X)=XW_kX^{T}-t^2.\]
For fixed $t$, we minimize $XW_kX^{T}$ subject to
\[  \sum_iX_i=t,\qquad \sum_i\widetilde g_iX_i=0.
\]
Also, if we set 
\[
 A=
 \begin{pmatrix}
 1&\cdots&1\\
 \widetilde g_1&\cdots&\widetilde g_n
 \end{pmatrix},
 \qquad
 b=
 \begin{pmatrix}
 t\\0
 \end{pmatrix}.
\]
then the two constraint conditions can be written as 
 $$AX^T=b.$$ 
As a result, we will solve the problem
$$\min\left\{X W_k X^T : AX^T=b\right\}.$$
 From the well-known fact, it follows that the minimum is equal to 
 $$b^T\left(AW_k^{-1}A^T\right)^{-1}b.$$
 Set 
 $$M:=AW_k^{-1}A^T,$$
 i.e.,
 
 \begin{equation}\label{eq1}
   M=
\begin{pmatrix}
\displaystyle\sum_i \frac{1}{w_i}
&
\displaystyle\sum_i \frac{\widetilde g_i}{w_i}
\\[6pt]
\displaystyle\sum_i \frac{\widetilde g_i}{w_i}
&
\displaystyle\sum_i \frac{\widetilde g_i^2}{w_i}
\end{pmatrix}.
 \end{equation} 
For simplify, we introduce some notations 
\[  L:=\sum_i \widetilde g_i= n-1-n\widetilde f, \qquad x:=\widetilde g_k,
\]
and
\[ N:=n-1-2n\widetilde f+n\widetilde f^2+2x^2.\]
Since $w_k=1$ and $w_i=3$ for $i\neq k$, we have  
$$\sum_i \frac{1}{w_i}=\frac{1}{w_k}
+\sum_{i\neq k}\frac{1}{w_i}=1+\frac{n-1}{3}=
\frac{n+2}{3}.$$
Hence
\begin{equation*}
 \sum_i\frac{\widetilde g_i}{w_i}
= x+\frac{1}{3}\sum_{i\neq k}\widetilde g_i
=x+\frac{1}{3}(L-x)
=\frac{L+2x}{3}.
\end{equation*}
In view of (\ref{eq21}), we obtain 
 \begin{equation*}
   \begin{aligned}
\sum_i\frac{\widetilde g_i^2}{w_i}
&=
x^2+\frac{1}{3}\sum_{i\neq k}\widetilde g_i^2\\
&=
x ^2+\frac{1}{3}\left(n-1-2n\widetilde f+n\widetilde f^2-x^2\right)\\
&=
\frac{n-1-2n\widetilde f+n\widetilde f^2+2x^2}{3}=\frac{N}{3}.
\end{aligned}
 \end{equation*}
It follows that  
 \begin{equation*}
   M=\frac{1}{3}
\begin{pmatrix}
n+2 & L+2x\\
L+2x & N
\end{pmatrix}.
 \end{equation*}
A direct computation yields 
\begin{equation*}
  M^{-1}
=\frac{3}{D}
\begin{pmatrix}
N & -(L+2x)\\
-(L+2x) & n+2
\end{pmatrix},
\end{equation*}
which implies that 
\begin{equation*}
  \begin{aligned}
b^T M^{-1}b
&=\frac{3}{D}
\begin{pmatrix}
t \ \ 0
\end{pmatrix}
\begin{pmatrix}
N & -(L+2x)\\
-(L+2x) & n+2
\end{pmatrix}
\begin{pmatrix}
t\\
0
\end{pmatrix}\\
&=
\frac{3N}{D}t^2,
\end{aligned}
\end{equation*}
where $D:=9\det M=(n+2)N-(L+2x)^2.$

Recalling (\ref{eq1}), by the aid of  Cauchy–Schwarz inequality, we have 
$$\det M=\sum_i \left(\frac{1}{\sqrt{w_i}}\right)^2 \sum_i \left(\frac{\widetilde g_i}{\sqrt{w_i}}\right)^2-\sum_i \frac{1}{\sqrt{w_i}} \frac{\widetilde g_i}{\sqrt{w_i}}\sum_i \frac{1}{\sqrt{w_i}} \frac{\widetilde g_i}{\sqrt{w_i}}\geq 0.$$
If $\det M=0$, then all $\widetilde g_i$ are equal, i.e., $\widetilde g_i=c$ for some constant $c$. Hence the homogeneous constraint is $$0=\sum_i \widetilde g_iX_i=c \sum_i X_i=c t.$$
Due to $ \widetilde g_i>0$, we have $c>0$, which yields $$t=0.$$
Without loss of generality, we assume that $\det M>0$. We consider the quantity 
\begin{equation}\label{eq22}
  \min Q_k(X)=b^T M^{-1}b-t^2=\frac{3N-D}{D}t^2.
\end{equation}
We next verify that the coefficient $3N-D$ in \eqref{eq22} is
strictly positive. A direct calculation gives
\begin{equation}\label{eq:positive-numerator}
 3N-D  = n\widetilde f^2+4xL-2(n-3)x^2.
\end{equation}
Since $x$ and $L$ are strict positive, when $n=2$ and $3$, it is seen that $3N-D>0$. Next we consider the case $n\geq 4$.
Since
\[ |\widetilde\lambda_k|
 \le|\widetilde\lambda|
 =\sqrt{1-2\widetilde f}
\]
and
\[ x=1-\widetilde f-\widetilde\lambda_k,
\]
we have
\[ 1-\widetilde f-\sqrt{1-2\widetilde f}
 \le x\le
 1-\widetilde f+\sqrt{1-2\widetilde f}.
\]
The right-hand side of
\eqref{eq:positive-numerator} is a concave quadratic polynomial in
$x$, so its minimum on the above interval occurs at two endpoints.
Set 
\[  d:=\sqrt{1-2\widetilde f},\]
The two endpoints are
\[\begin{aligned}
x_{\pm}&=1-\widetilde{f}\pm d\\
&=\frac{1+d^2}{2}\pm d\\
&=\frac{(1\pm d)^2}{2}.
\end{aligned}\]
and \[\begin{aligned}
L &=n-1-n\widetilde{f}\\
&=n-1-\frac{n}{2}(1-d^2)\\
&=\frac{n-2+nd^2}{2}.
\end{aligned}
\]
Substitution this into \eqref{eq:positive-numerator} gives
\begin{align*}
 &H(x_\pm):=n\widetilde f^2+4x_\pm L-2(n-3)x_\pm^2\\
 &\quad=
 \frac{(1\pm d)^2}{4}
 \left[
 (3n-2)\mp2(3n-6)d+(3n+6)d^2
 \right].
\end{align*}
At $x_{-}$,  $H(x_{-})>0$ follows from $n\geq 4$ and $d>0$.  At $x_{+}$,  
the discriminant of the quadratic polynomial
$(3n+6)d^2-2(3n-6)d+(3n-2)$ with respect to $d$ is
\[
\Delta=4(3n-6)^2-4(3n+6)(3n-2)=192(1-n)<0,
\]
which implies that $H(x_{+})>0$.
Consequently, we prove that 
\[ 3N-D>0,\]
which implies that 
$$Q_k(X)\geq \frac{3N-D}{D}t^2>0.$$
\medskip
\noindent
\textbf{Step 4. Uniform estimates for large and bounded trace.}

We first consider the region where $\sigma_1(\lambda)$ is large. 
We claim that there exists some constant $S>0$ depending only on $n$, $K$, $\overline{f}$ and $\underline{f}$
such that when $\sigma_1(\lambda)>S$, we have 
\begin{equation}\label{eq:large-trace}
Q_k(X)\ge \frac{7}{6}\frac{g_k}{\sigma_1(\lambda)} t^2.
\end{equation}
For simplify, we set 
$$m=\frac{3N-D}{D},\qquad \widetilde g_k=\frac{g_k}{\sigma_1(\lambda)}.$$ 
Form (\ref{eq22}), we need to prove $$m\geq \frac{7}{6}\widetilde g_k,$$ 
which yields (\ref{eq:large-trace}).

We assume that there is a sequence of $\lambda^{(j)}\in \Gamma_2$ satisfying $\lambda^{(j)}_i\geq -K$ and  $\sigma_2(\lambda^{(j)})/\sigma_1(\lambda^{(j)})=f^{(j)}$ with $\underline{f}\leq f^{(j)}\leq \overline{f}$ such that $$\sigma_1(\lambda^{(j)}) \rightarrow \infty\ \ \ \mbox{as}\ \ \ j\to\infty.$$
We normalize  $\lambda^{(j)}$ by setting 
\[
 \widetilde\lambda_i^{(j)}:=\frac{\lambda_i^{(j)}}{\sigma_1(\lambda^{(j)})},
 \qquad
 \widetilde f^{(j)}:=\frac{f^{(j)}}{\sigma_1(\lambda^{(j)})},
 \qquad
 \widetilde g_i^{(j)}:=\frac{g_i^{(j)}}{\sigma_1(\lambda^{(j)})}
 =1-\widetilde f^{(j)}-\widetilde\lambda_i^{(j)}.
\]
Clearly, we have $$\sum_i  \widetilde\lambda_i^{(j)}=1,\qquad 
\widetilde\lambda_i^{(j)}
=\frac{\lambda_i^{(j)}}{\sigma_1(\lambda^{(j)})}
\ge -\frac{K}{\sigma_1(\lambda^{(j)})}, \qquad \sigma_2(\widetilde \lambda^{(j)})=\widetilde f^{(j)}.
$$
Since $|\widetilde\lambda^{(j)}|\leq 1$ and $\underline{f}\leq f^{(j)}\leq \overline{f}$,
up to a subsequence, we assume
$$\widetilde\lambda^{(j)} \rightarrow \widetilde\lambda^{(\infty)},\qquad  \widetilde f^{(j)} \rightarrow 0 \ \ \ \mbox{as}\ \ \ j\to\infty.$$
Then 
$$\sum_i \widetilde\lambda_i^{(\infty)}=1,\qquad \widetilde\lambda_i^{(\infty)}\geq 0, \qquad \sigma_2( \widetilde\lambda^{(\infty)})=0.$$
Moreover, we can conclude that there exists some $p\in \{1,2,\ldots,n\}$ such that 
$$\widetilde\lambda^{(\infty)}=e_p=(0,0,\ldots,1,\ldots,0).$$

\medskip
\noindent
\textbf{Case 1: \(p\neq k\).}
In this case,
\[
\widetilde{\lambda}_k^{(j)}\longrightarrow 0,
\qquad
\widetilde f^{(j)}\longrightarrow 0.
\]
Hence
\[\widetilde g_k^{(j)}
 =1-\widetilde f^{(j)}-\widetilde\lambda_k^{(j)}
\longrightarrow 1,
\]
\[L^{(j)}=n-1-n\widetilde f^{(j)} \longrightarrow n-1,
\]
\[N^{(j)}
=n-1-2n\widetilde f^{(j)}+n(\widetilde f^{(j)})^{2}+2(\widetilde g_k^{(j)})^2
\longrightarrow n+1
\]
and 
\[D^{(j)} =(n+2)N^{(j)}-(L^{(j)}+2 \widetilde g_k^{(j)})^2 \longrightarrow n+1.
\]
Consequently, we have 
\[
3N^{(j)}-D^{(j)} \longrightarrow 2(n+1).
\]
Therefore,
\[
m^{(j)}=\frac{3N^{(j)}-D^{(j)}}{D^{(j)}}\longrightarrow 2,
\qquad
\frac{m^{(j)}}{\widetilde g_k^{(j)}}\longrightarrow 2.
\]

\medskip
\noindent
\textbf{Case 2: \(p=k\).}
In this case,
\[
\widetilde{\lambda}_k^{(j)}\longrightarrow 1,
\qquad
\widetilde f^{(j)}\longrightarrow 0.
\]
Hence
\[\widetilde g_k^{(j)}
 =1-\widetilde f^{(j)}-\widetilde\lambda_k^{(j)}
\longrightarrow 0,
\]

\[
L^{(j)}=n-1-n\widetilde f^{(j)} \longrightarrow n-1,
\]
\[
N^{(j)}
=n-1-2n\widetilde f^{(j)}+n(\widetilde f^{(j)})^{2}+2(\widetilde g_k^{(j)})^2 \longrightarrow n-1,
\]
and therefore
\begin{equation*}
\begin{aligned}
D^{(j)} =(n+2)N^{(j)}-(L^{(j)}+2 \widetilde g_k^{(j)})^2 \longrightarrow 3(n-1)>0.
\end{aligned}
\end{equation*}
Since \(\widetilde g_k^{(j)} >0\), we may write
\begin{equation*}
\frac{m^{(j)}}{\widetilde g_k^{(j)}}
= \frac{3N^{(j)}-D^{(j)}}{D^{(j)} \widetilde g_k^{(j)}}
= \frac{n (\widetilde f^{(j)})^{2}/\widetilde g_k^{(j)}+4L^{(j)}-2(n-3)\widetilde g_k^{(j)}
}{D^{(j)}}.
\end{equation*}
Observing that 
\[ \frac{n (\widetilde f^{(j)})^{2}}{\widetilde g_k^{(j)}}\ge0, 
\]
we have 
\begin{equation*}
\begin{aligned}
\liminf_{j\to\infty}\frac{m^{(j)}}{\widetilde g_k^{(j)}}
&\ge \frac{4(n-1)}{3(n-1)}=\frac43.
\end{aligned}
\end{equation*}
Consequently, in both case, we conclude that 
\begin{equation*}
\liminf_{j\to\infty}\frac{m^{(j)}}{\widetilde g_k^{(j)}}
\ge \frac{4(n-1)}{3(n-1)}=\frac43.
\end{equation*}
This implies that there exists some constant $S>0$ depending only on $n$, $K$, $\overline{f}$ and $\underline{f}$
such that whenever $\sigma_1(\lambda)>S$, we have 
$$m\geq \frac{7}{6}\widetilde g_k,$$
which prove the claim.

We next consider the compact region
\begin{equation}\label{eq23}
  \frac{2n}{n-1}\underline{f}\le \sigma_1(\lambda)\le S.
\end{equation}
We claim that there exists some constant $C_0>0$ depending only on $n$, $K$, $\overline{f}$ and $\underline{f}$ such that when $\sigma_1(\lambda)$ satisfies (\ref{eq23}) and $X$ satisfies $g\cdot X=0$, we have 
\begin{equation}\label{eq29}
Q_k(X)\ge C_0\frac{g_k}{\sigma_1(\lambda)} t^2.
\end{equation}
Clearly, when $\sum_i X_i=0$, this claim holds. 

To obtain a contradiction, suppose that there are sequences of $\lambda^{(j)}\in \Gamma_2$ and $X^{(j)}\in \mathbb{R}^n$
satisfying $\lambda^{(j)}_i\geq -K$ and  $\sigma_2(\lambda^{(j)})/\sigma_1(\lambda^{(j)})=f^{(j)}$ with $\underline{f}\leq f^{(j)}\leq \overline{f}$ 
such that $$ \frac{2n}{n-1}\underline{f}\le \sigma_1(\lambda^{(j)})\le S,\qquad g^{(j)}\cdot X^{(j)}=0,\qquad \sum_i X_i^{(j)} \neq 0,$$
but we have 
$$\frac{Q_{k}\bigl(X^{(j)}\bigr) }{
\left(g_{k}^{(j)}/\sigma_1(\lambda^{(j)})\right)
\left(\sum_i X_i^{(j)}\right)^2
}\longrightarrow 0 \ \ \ \mbox{as}\ \ \ j\to\infty. $$
Since both the constraint and \(Q_k\) are homogeneous with respect to \(X\), we may normalize 
$$\sum_i X_i^{(j)}=1.$$
In fact, (\ref{eq24}) and (\ref{eq:g-upper}) give the uniformly bound of $g_{k}^{(j)}/\sigma_1(\lambda^{(j)})$. It follows  that
$$Q_{k}\bigl(X^{(j)}\bigr) \longrightarrow 0 \ \ \ \mbox{as}\ \ \ j\to\infty.$$
This implies that \(X^{(j)}\) is bounded, which follows from 
\[\begin{aligned}
\left(X_k^{(j)}\right)^2+3\sum_{i\ne k}\left(X_i^{(j)}\right)^2
&=Q_k\bigl(X^{(j)}\bigr)
+
\left(\sum_iX_i^{(j)}\right)^2\\
&=Q_k\bigl(X^{(j)}\bigr)+1.
\end{aligned}
\]
Therefore, passing to a subsequence, we may assume that
$$X^{(j)}\longrightarrow X^\infty \ \ \ \mbox{as}\ \ \ j\to\infty.$$

By the assumption (\ref{eq23}) and the two inequalities $|\lambda^{(j)}|\leq \sigma_1(\lambda^{(j)})$ and $|g^{(j)}|\leq 2 \sqrt{n} \sigma_1(\lambda^{(j)})$, we know that $\lambda^{(j)}$, $g^{(j)}$, $\sigma_1(\lambda^{(j)})$ and $f^{(j)}$ are all bounded. Hence, up to a subsequence, we may also assume that
\[\begin{aligned}
\lambda^{(j)}&\longrightarrow\lambda^\infty,
&\qquad
\sigma_1(\lambda^{(j)})&\longrightarrow\sigma_1(\lambda^\infty),\\
f^{(j)}&\longrightarrow f^\infty,
&
g^{(j)}&\longrightarrow g^\infty
\qquad\mbox{as }\ \ j\to\infty.
\end{aligned}
\]
which satisfy 
$$\sum_i X_i^\infty=1,
\qquad
g^\infty\cdot X^\infty=0,
\qquad
Q_k(X^\infty)=0.
$$
In addition, we let $D^{\infty}$ be the limit of the sequence of $D^{(j)}$. If \(D^\infty>0\), then strict positivity for $Q_k$ proved in Step 3 gives
\[ Q_k(X^\infty) >0,
\]
which contradicts \(Q_k(X^\infty)=0\).

If $D^{\infty}=0$, then we know that   all $ g_i^{\infty}$ are equal, i.e., $ g_i^{\infty}=c$ for some constant $c$. Hence the homogeneous constraint is $$0=\sum_i g_i^{\infty}X_i^{\infty}=c \sum_i X_i^{\infty}=c.$$
Due to $  g_i^{\infty}>0$, we have $c>0$, which yields a contradiction.

As a consequence, we complete the proof of the desired claim.

It remains to introduce the shifted quantity \(a>0\) depending only on  $n$, $K$, $\overline{f}$ and $\underline{f}$. Choose $\theta=\frac{1}{36}$ such that
\[ \frac{10}{9}=1+4\theta<\frac76,
\]
and choose
\begin{equation*}
 a\ge  S\max\left\{ 0,\frac{10}{9C_0}-1
 \right\}.
\end{equation*}
If $\sigma_1(\lambda)\ge S$, then we have 
\begin{equation*}
  \begin{split}
    Q_k(X)&\ge \frac{7}{6}\frac{g_k}{\sigma_1(\lambda)} t^2  \\
      & \geq \frac{10}{9} \frac{g_k}{\sigma_1(\lambda)} t^2\\
      &\geq \frac{10}{9}\frac{g_k}{\sigma_1(\lambda)+a} t^2.
  \end{split}
\end{equation*}
Since \(\sigma_1(\lambda)\le S\), this choice of $a$ implies that 
\[ a\ge
\sigma_1(\lambda) \left(\frac{10}{9C_0}-1\right),
\]and therefore
\[
C_0(\sigma_1(\lambda)+a)\ge\frac{10}{9}\sigma_1(\lambda),
\] that is,
\[
\frac{C_0}{\sigma_1(\lambda)}
\ge
\frac{10}{9} \frac{1}{\sigma_1(\lambda)+a}.
\]
Combining this with \eqref{eq29}, we conclude that
\[
Q_k(X)\ge  C_0\frac{g_k}{\sigma_1(\lambda)} t^2\geq 
\frac{10}{9}\frac{g_k}{\sigma_1(\lambda)+a}t^2.
\]
It follows that we prove 
\begin{equation}\label{eq27}
  Q_k(X)\geq 
\frac{10}{9}\frac{g_k}{\sigma_1(\lambda)+a}t^2
\end{equation}
under the homogeneous constraint $ q=\sum_i g_iX_i=0.$

\medskip
\noindent
\textbf{Step 5. Coercivity on the homogeneous hyperplane.}

For $g\cdot Y=\sum_i g_iY_i=0$, 
we  establish a crude coercivity estimate
\begin{equation}\label{eq:homogeneous-coercivity}
  Q_k(Y)\geq C_* \left(\frac{f}{\sigma_1(\lambda)}\right)^2 |Y|^2
\end{equation}
for some constant $C_*>0$ depending only on   $n$, $K$, $\overline{f}$ and $\underline{f}$.

Indeed, if $|\sum_i Y_i|\le|Y|/2$, then  we obtain 
$$Q_k(Y) = Y W Y^T-\left(\sum_iY_i\right)^2 \geq |Y|^2-\frac{1}{4}|Y|^2 =\frac{3}{4}|Y|^2.$$
Recalling\[
0<\frac{f}{\sigma_1(\lambda)}\leq\frac{n-1}{2n}<1,
\] we have 
$$Q_k(Y)\geq \frac{3}{4} \left(\frac{f}{\sigma_1(\lambda)}\right)^2 |Y|^2.$$
Otherwise, if  $|\sum_i Y_i| > |Y|/2$ , by (\ref{eq27}), we have 
\begin{equation}\label{eq26}
  \begin{aligned}
Q_k(Y) &\geq
\frac{10}{9}\frac{g_k}{\sigma_1(\lambda)+a}\left(\sum_iY_i\right)^2\\
&\geq \frac{10}{36}\frac{g_k}{\sigma_1(\lambda)+a} |Y|^2.
\end{aligned}
\end{equation}
By the aid of (\ref{eq24}) and the increase of $s/(s+a)$ with respect to $s$, we discover 
\[
\begin{aligned}
\frac{g_k}{\sigma_1(\lambda)+a}
&= \frac{g_k}{\sigma_1(\lambda)} \frac{\sigma_1(\lambda)}{\sigma_1(\lambda)+a}\\
&\geq \frac{n}{2(n-1)}
\frac{s_0}{s_0+a}
\left(\frac{f}{\sigma_1(\lambda)}\right)^2,
\end{aligned}
\]
where $s_0=\frac{2 n}{n-1}\underline{f}$ follows from (\ref{eq25}).
 Substituting this into (\ref{eq26}) yields
 \[
Q_k(Y) \geq
\frac{10}{72}\frac{n}{n-1}\frac{s_0}{s_0+a}
\left(\frac{f}{\sigma_1(\lambda)}\right)^2 |Y|^2.
\]
Taking \[C_*:=
\min\left\{\frac{3}{4}, \frac{10}{72}\frac{n}{n-1}\frac{s_0}{s_0+a}\right\}
>0,
\]
we prove \eqref{eq:homogeneous-coercivity}.

Taking into consideration the sum of (\ref{eq27}) multiplied by $\frac{39}{40}$  and (\ref{eq:homogeneous-coercivity}) multiplied by $\frac{1}{40}$, we obtain 
\begin{equation}\label{eq:combined-homogeneous}
 Q_k(Y)
 \ge
\frac{13}{12}\frac{g_k}{\sigma_1(\lambda)+a}
 \left(\sum_iY_i\right)^2
 +  \frac{C_*}{40}\left(\frac{f}{\sigma_1(\lambda)}\right)^2|Y|^2.
\end{equation}

\medskip
\noindent
\textbf{Step 6. Removal of the nonhomogeneous constraint.}

We  consider   the nonhomogeneous constraint
$$q=\sum_i g_iX_i\neq 0.$$
In order to remove the nonhomogeneous constraint, we set
$$Z:=\frac{g\cdot X}{|g|^2}g,
\qquad Y:=X-Z. $$
Then $$
g\cdot Y = g\cdot X-\frac{g\cdot X}{|g|^2}|g|^2=0.
$$
 By \eqref{eq28},  the Cauchy–Schwarz inequality leads to 
\begin{equation}\label{eq:Z-bound}
  \left|\sum_i Z_i\right|\leq \sqrt{n}|Z| =\sqrt{n} \frac{|q|}{|g|} \leq \frac{2 n}{n-1} \frac{|q|}{\sigma_1(\lambda)}.
\end{equation}
Expanding the quadratic form $Q_k$ at $Y+Z$ yields that 
$$Q_k(X) = Q_k(Y) + 2B_k(Y,Z)+Q_k(Z),$$
where $$B_k(Y,Z) = Y W Z^{T} -\left(\sum_i Y_i\right)\left(\sum_i Z_i\right).$$
Since $W_k$ has eigenvalues  $1$ and $3$, it is clear that 
$$|B_k(Y,Z)| \leq (n+3) |Y| |Z|$$ and 
$$Q_k(Z) \geq -(n-1)|Z|^2.$$
This with \eqref{eq:combined-homogeneous} leads to the estimate 
\begin{equation*}
  \begin{split}
    Q_k(X)  & \geq \frac{13}{12}\frac{g_k}{\sigma_1(\lambda)+a}
 \left(\sum_iY_i\right)^2
 +  \frac{C_*}{40}\left(\frac{f}{\sigma_1(\lambda)}\right)^2|Y|^2 \\
      & - 2(n+3) |Y| |Z|-(n-1) |Z|^2.
  \end{split}
\end{equation*}
By the Young's inequality
\begin{equation*}
  \begin{split}
    2(n+3)|Y| |Z|
= & \left(\sqrt{ \frac{C_*}{40}} \frac{f}{\sigma_1(\lambda)}|Y|\right)
\left(\frac{2(n+3)\sqrt{40}}{\sqrt{C_*}} \frac{\sigma_1(\lambda)}{f}|Z|\right)\\
      \leq &  \frac{C_*}{80} \left(\frac{f}{\sigma_1(\lambda)}\right)^2
|Y|^2+\frac{160(n+3)^2}{2C_*}\left(\frac{\sigma_1(\lambda)}{f}\right)^2
|Z|^2,
  \end{split}
\end{equation*}
we arrive at the estimate
\begin{equation*}
  \begin{split}
    Q_k(X)  & \geq \frac{13}{12}\frac{g_k}{\sigma_1(\lambda)+a}
 \left(\sum_iY_i\right)^2
 +  \frac{C_*}{80}\left(\frac{f}{\sigma_1(\lambda)}\right)^2|Y|^2 \\
      & - \frac{160(n+3)^2}{2C_*}\left(\frac{\sigma_1(\lambda)}{f}\right)^2
|Z|^2-(n-1) |Z|^2.
  \end{split}
\end{equation*}
Combining this with $0<f/\sigma_1(\lambda)<1$ and the estimate \eqref{eq:Z-bound} of $Z$, we deduce that 
\begin{equation*}
  \begin{split}
    Q_k(X)  & \geq \frac{13}{12}\frac{g_k}{\sigma_1(\lambda)+a}
 \left(\sum_iY_i\right)^2
 +  \frac{C_*}{80}\left(\frac{f}{\sigma_1(\lambda)}\right)^2|Y|^2 \\
      & - \frac{160(n+3)^2}{2C_*}\left(\frac{\sigma_1(\lambda)}{f}\right)^2
|Z|^2-(n-1) \left(\frac{\sigma_1(\lambda)}{f}\right)^2|Z|^2\\
& \geq \frac{13}{12}\frac{g_k}{\sigma_1(\lambda)+a}
 \left(\sum_iY_i\right)^2
 +  \frac{C_*}{80}\left(\frac{f}{\sigma_1(\lambda)}\right)^2|Y|^2 \\
 &-\frac{4n}{(n-1)^2}\left(\frac{160(n+3)^2}{2C_*}+n-1\right) \frac{q^2}{f^2}.
  \end{split}
\end{equation*}
Consequently, there exists some constant $C>0$ depending only on  $n$, $K$, $\overline{f}$ and $\underline{f}$ such that 
\begin{equation*}
 Q_k(X)  \ge
 \frac{13}{12}\frac{g_k}{\sigma_1(\lambda)+a}
 \left(\sum_i Y_i\right)^2
 -Cf^{-2}q^2.
\end{equation*}

Finally, we will replace the term $\left(\sum_i Y_i\right)^2$ by $\left(\sum_i X_i\right)^2$. 
By the aid of the Young’s inequality with $\epsilon=\frac{2}{39}$, we have 
$$\left(\sum_i X_i-\sum_i Z_i\right)^2\geq \frac{37}{39}\left(\sum_i X_i\right)^2-\frac{37}{2} \left(\sum_i Z_i\right)^2.$$
Furthermore, since $0<g_k<2 \sigma_1(\lambda)$, $0<f/\sigma_1(\lambda)<1$ and \eqref{eq:Z-bound}, we have 
\[
\begin{aligned}
\frac{g_k}{\sigma_1(\lambda)+a}\left(\sum_i Z_i\right)^2
&\leq \frac{2\sigma_1(\lambda)}{\sigma_1(\lambda)+a}
       \left(\sum_i Z_i\right)^2\\
&\leq \frac{8n^2}{(n-1)^2}\frac{q^2}{\sigma_1^2(\lambda)}\\
&\leq \frac{8n^2}{(n-1)^2}\frac{q^2}{f^2}.
\end{aligned}
\]
It follows that 
\begin{equation*}
  \begin{split}
    Q_k(X)  &\ge
 \frac{13}{12}\frac{g_k}{\sigma_1(\lambda)+a}
 \left(\sum_i Y_i\right)^2
 -Cf^{-2}q^2\\
      & \geq \frac{37}{36}\frac{g_k}{\sigma_1(\lambda)+a}
 \left(\sum_i X_i\right)^2-\frac{481}{24}\frac{g_k}{\sigma_1(\lambda)+a}
 \left(\sum_i Z_i\right)^2-Cf^{-2}q^2\\
 &\geq \frac{37}{36} \frac{g_k}{\sigma_1(\lambda)+a}
 \left(\sum_i X_i\right)^2-\left(\frac{481}{3}\frac{n^2}{(n-1)^2}+C\right)f^{-2}q^2.
  \end{split}
\end{equation*}
Summing this inequality over $k$ and using \eqref{eq:tensor-reduction-semiconvex} yields the desired result.
\end{proof}
\bigskip

Consider the equation $$G(D^2u,x):=\sigma_2(D^2u)-f\sigma_1(D^2u)=0,$$
whose linearized operator is given by
$$ G_{ij}:=\frac{\partial G}{\partial u_{ij}}=(\Delta u-f)\delta_{ij}-u_{ij}$$
and set $G=(G_{ij})$.
Differentiating the equation once gives
\begin{equation}\label{eq:first-diff}
 G_{ij}u_{ijk}=\Delta u f_k.
\end{equation}
Differentiating once more, we have 
  $$G_{ij}u_{ijkk}+\partial_k G_{ij} u_{ijk}=f_{kk} \Delta u+f_k \partial_k \Delta u=f_{kk} \Delta u+f_k \triangle u_k.$$ 
Due to $$\partial_k G_{ij}=\partial_k (\Delta u\,\delta_{ij}-u_{ij}-f\delta_{ij})=(\Delta u)_k \delta_{ij}-u_{ijk}-f_k\delta_{ij},$$
we have 
\begin{equation*}
  \begin{split}
     \partial_k G_{ij} u_{ijk}=& (\Delta u_k \delta_{ij}-u_{ijk}-f_k\delta_{ij})u_{ijk} \\
       =&\Delta u_k u_{iik}-u_{ijk}^2-f_k u_{iik}\\
       =& \Delta u_k \Delta u_k-u_{ijk}^2-f_k \Delta u_k.
   \end{split}
\end{equation*}
  Hence
\begin{equation}\label{eq:second-diff}
 \begin{split}
   G_{ij}(\Delta u)_{ij} & =|D^3u|^2-|D \Delta u|^2+\Delta u\Delta f+2Df\cdot D \Delta u\\
     & =u_{ijk}^2- (\Delta u_k)^2+\Delta f \Delta u +2 f_k \Delta u_k.
 \end{split}
\end{equation}
Now we apply Lemma \ref{lem:algebra-semiconvex} to obtain  the shifted trace Jacobi inequality.

\begin{proposition}\label{prop:strong-jacobi}
Let $f\in C^{2}(B_{10})$ with $\inf_{B_{10}}f>0$ and $u\in C^4(B_{10})$ be a 2-convex solution of (\ref{eq:main-equation}) with $D^2u\geq -KI$ for some positive constant $K$. Then there are some constants $a>0$ and $C>0$ depending only on $n$, $K$, $\sup_{B_{10}}f$, $\inf_{B_{10}}f$ and $\|Df\|_{L^{\infty}(B_{10})}$ such that 
\begin{equation}\label{eq:strong-jacobi}
 G_{ij}\beta_{ij}\geq \frac{1}{72} G_{ij}\beta_i\beta_j+h  \Delta f-C\Delta u,
\end{equation}
where \[
 \beta:=\log (\Delta u+a),\qquad h:=\frac{\Delta u}{\Delta u+a}.
\]
\end{proposition}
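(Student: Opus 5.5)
We will compute $G_{ij}\beta_{ij}$ directly at a point where $D^2u$ is diagonal and then use Lemma \ref{lem:algebra-semiconvex} to absorb the negative gradient term.

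\textbf{Pointwise expansion.} Write $s=\Delta u+a$. Then
\[
G_{ij}\beta_{ij}=\frac{G_{ij}(\Delta u)_{ij}}{s}-\frac{G_{ij}(\Delta u)_i(\Delta u)_j}{s^2}.
\]
Substituting \eqref{eq:second-diff} gives
\[
G_{ij}\beta_{ij}=\frac{|D^3u|^2-|D\Delta u|^2}{s}+h\Delta f+\frac{2f_k(\Delta u)_k}{s}-G_{ij}\beta_i\beta_j.
\]
In diagonal coordinates, $D^2u=\mathrm{diag}(\lambda)$ and $G_{kk}=g_k=\sigma_1-f-\lambda_k$. We apply Lemma \ref{lem:algebra-semiconvex} with $T_{ijk}=u_{ijk}$. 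Then $t_k=(\Delta u)_k$, and by \eqref{eq:first-diff}, $q_k=\sum_i g_iu_{iik}=\Delta u\, f_k$. The lemma yields
\[
|D^3u|^2-|D\Delta u|^2\ge \frac{37}{36}\frac{1}{\Delta u+a}\sum_k g_k(\Delta u)_k^2-\frac{C}{f^2}(\Delta u)^2|Df|^2.
\]
Dividing by $s$ and recalling $\beta_k=(\Delta u)_k/s$, this becomes
\[
\frac{|D^3u|^2-|D\Delta u|^2}{s}\ge \frac{37}{36}G_{ij}\beta_i\beta_j-\frac{C|Df|^2}{f^2}\frac{(\Delta u)^2}{s}.
\]
The error term is at most $C\Delta u$, since $(\Delta u)^2/s\le\Delta u$. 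This step requires $a$ to be the constant from the lemma.

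\textbf{Cross term.} It remains to control $2f_k(\Delta u)_k/s=2f_k\beta_k$. By Cauchy--Schwarz with the weight $G$,
\[
2|f_k\beta_k|\le \epsilon\, G_{ij}\beta_i\beta_j+\epsilon^{-1}G^{ij}f_if_j .
\]
By \eqref{eq:G-basic}, the last term is bounded by $\epsilon^{-1}|Df|^2\tr(G^{-1})\le C\epsilon^{-1}\Delta u/f^2$. Taking $\epsilon=\tfrac{1}{72}$ gives coefficient
\[
\frac{37}{36}-1-\frac{1}{72}=\frac{1}{72}
\]
in front of $G_{ij}\beta_i\beta_j$, which is exactly \eqref{eq:strong-jacobi}. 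The constant $C$ depends on $\inf f$ and $\|Df\|_{L^\infty}$, and on $n,K,\sup f$ through the lemma.

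\textbf{Main obstacle.} The only delicate point is checking that the hypotheses of Lemma \ref{lem:algebra-semiconvex} hold pointwise with uniform constants. We need $\underline f=\inf f$, $\overline f=\sup f$, semi-convexity $\lambda_n\ge -K$, and ordering of the eigenvalues after rotation; this is legitimate because $\beta$ and the expression $G_{ij}\beta_{ij}$ are rotation invariant. We must also identify $q_k$ correctly. This uses \eqref{eq:first-diff}: in diagonal coordinates $G_{ij}u_{ijk}=\sum_i g_iu_{iik}$ with $g_i=\sigma_1-f-\lambda_i$, matching the lemma's $g_i$. Once these are confirmed, the rest is the bookkeeping above.
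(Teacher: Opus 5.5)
Your proposal is correct and follows the paper's proof essentially line by line. Like the paper, you diagonalize at a point, apply Lemma \ref{lem:algebra-semiconvex} with $T_{ijk}=u_{ijk}$ and $q_k=\Delta u\, f_k$ from \eqref{eq:first-diff}, substitute into \eqref{eq:second-diff}, and absorb $2Df\cdot D\beta$ by the $G$-weighted Cauchy--Schwarz inequality with $\epsilon=\tfrac{1}{72}$ and the bound on $\tr(G^{-1})$ from \eqref{eq:G-basic}, yielding the same coefficient $\tfrac{37}{36}-1-\tfrac{1}{72}=\tfrac{1}{72}$.
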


\begin{proof}
For any fixed $x_0\in B_{10}$, we choose an orthogonal matrix $Q$ such that
$$Q^T D^2u(x_0)Q=diag(\lambda_1,\ldots,\lambda_n).$$
Then we consider the transformation 
$$x=Qy, \qquad \tilde{u}(y)=u(Qy), \qquad \tilde{f}(y)=f(Qy).$$
Moreover, we define new equation $\tilde{G}$ as $$\tilde{G}(D^2 \tilde{u},y):=\sigma_2(D^2\tilde{u})-\tilde{f}\sigma_1(D^2\tilde{u})=0$$
and  the  linearized operator $\tilde{G}=Q^{T}GQ$ is also diagonal at $Q^{-1}x_0$. It is easy to check that 
\[
\begin{aligned}
\tilde{G}_{ij}(\Delta\tilde{u})_i(\Delta\tilde{u})_j
 &=G_{ij}(\Delta u)_i(\Delta u)_j,\\
|Df|&=|D\tilde{f}|, \qquad |D^k u|=|D^k\tilde{u}|\quad (k=1,2,3).
\end{aligned}
\]
Without loss of generality, we assume that $D^2u$ is  diagonal.
Apply Lemma \ref{lem:algebra-semiconvex} to $T_{ijk}=u_{ijk}$, 
$t_k=\Delta u_k$ and  $q_k=\Delta uf_k$ from \eqref{eq:first-diff}, we get
\begin{equation*}
 |D^3u|^2-|D \Delta u|^2
 \ge\frac{37}{36}\frac{1}{\Delta u+a} G_{ij} \Delta u_i\Delta u_j
      -C\frac{(\Delta u)^2}{f^2}|Df|^2,
\end{equation*}
for some constants $a>0$ and $C>0$ depending only on $n$, $K$, $\sup_{B_{10}}f$ and $\inf_{B_{10}}f$. Then, by (\ref{eq:second-diff}), we get
\begin{equation}\label{eq7}
  G_{ij}(\Delta u)_{ij}
 \ge\frac{37}{36}\frac{G_{ij} \Delta u_i\Delta u_j}{\Delta u+a} 
      -C\frac{(\Delta u)^2}{f^2}|Df|^2+\Delta u\Delta f+2Df\cdot D \Delta u.
\end{equation}
Since
$$\beta_i=\frac{(\Delta u)_i}{\Delta u+a}$$
and 
\[
\beta_{ij}
 =\frac{1}{\Delta u+a}(\Delta u)_{ij}-\frac{1}{(\Delta u+a)^2}(\Delta u)_i(\Delta u)_j,
\]
by substituting this into  (\ref{eq7}), we obtain
\begin{equation*}
\begin{aligned}
G_{ij}\beta_{ij}
&\geq \frac{1}{36}
 \frac{G_{ij}(\Delta u)_i(\Delta u)_j}{(\Delta u+a)^2}
 -C\frac{(\Delta u)^2}{f^2(\Delta u+a)}|Df|^2\\
&\qquad
 +\frac{\Delta u}{\Delta u+a}\Delta f
 +\frac{2}{\Delta u+a}Df\cdot D\Delta u\\
&\geq \frac{1}{36}G_{ij}\beta_i\beta_j+h\Delta f
 +2Df\cdot D\beta
 -C\frac{(\Delta u)^2}{f^2(\Delta u+a)}|Df|^2.
\end{aligned}
\end{equation*}
By  the Young’s inequality with $\epsilon=\frac{1}{72}$ and $ \tr(G^{-1})\le 2(n-1)\frac{\Delta u}{f^2}$, we have
\begin{equation*}
\begin{aligned}
2|Df\cdot D\beta|
&=2|G^{-1/2}Df\cdot G^{1/2}D\beta|\\
&\leq \frac{1}{72}G_{ij}\beta_i\beta_j
 +72(G^{-1})_{ij}f_i f_j\\
&\leq \frac{1}{72}G_{ij}\beta_i\beta_j
 +\frac{144(n-1)\Delta u}{f^2}|Df|^2\\
&\leq \frac{1}{72}G_{ij}\beta_i\beta_j
 +\frac{144(n-1)}{\inf_{B_{10}}f^2}
  \|Df\|_{L^{\infty}(B_{10})}^2\Delta u.
\end{aligned}
\end{equation*}
Moreover, using $$\frac{(\Delta u)^2}{f^2(\Delta u+a)}|Df|^2\leq \frac{1}{\inf_{B_{10}}f^2}\|Df\|_{L^{\infty}(B_{10})}^2 \Delta u,$$ we obtain
$$ G_{ij}\beta_{ij}\geq  \frac{1}{72} G_{ij}\beta_i\beta_j+h  \Delta f-\left(\frac{144(n-1) }{ \inf_{B_{10}}f^2}+\frac{C}{\inf_{B_{10}}f^2}\right)\|Df\|_{L^{\infty}(B_{10})}^2 \Delta u.$$
\end{proof}

\begin{proposition}[Weak Jacobi inequality]\label{prop:weak-jacobi}
Let $f\in C^{2}(B_{10})$ with $\inf_{B_{10}}f>0$ and $u\in C^4(B_{10})$ be a 2-convex solution of (\ref{eq:main-equation}) with $D^2u\geq -KI$ for some positive constant $K$. Then there exists some constant $C>0$ depending only on $n$, $K$, $\sup_{B_{10}}f$, $\inf_{B_{10}}f$ and $\|Df\|_{L^{\infty}(B_{10})}$ such that 
\begin{equation*}
\diver(GD\beta)\geq \frac{1}{144} G_{ij}\beta_i\beta_j+\diver(hDf)-C\Delta u
\end{equation*}
in the sense of distributions, where $\beta$ and $h$ are defined in Proposition \ref{prop:strong-jacobi}.
\end{proposition}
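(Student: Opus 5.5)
We will derive the weak Jacobi inequality from the strong one, Proposition \ref{prop:strong-jacobi}, by rewriting both sides in divergence form. The tools are the almost divergence-free identity $D_iG_{ij}=-f_j$ from Lemma \ref{lem:shift}, the bound $\tr(G^{-1})\le 2(n-1)\Delta u/f^2$, and Young's inequality. Since $u\in C^4$ and $f\in C^2$, every quantity is classical, so the inequality will hold pointwise and hence also in the sense of distributions.

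\emph{Step 1: the left-hand side.} We expand
\[
\diver(GD\beta)=\partial_i(G_{ij}\beta_j)=G_{ij}\beta_{ij}+(\partial_iG_{ij})\beta_j=G_{ij}\beta_{ij}-Df\cdot D\beta .
\]

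\emph{Step 2: the $\Delta f$ term.} We write
\[
h\Delta f=\diver(hDf)-Dh\cdot Df .
\]
Since $h=\Delta u/(\Delta u+a)=1-a/(\Delta u+a)$, we get
\[
Dh=\frac{a\,D\Delta u}{(\Delta u+a)^2}=\frac{a}{\Delta u+a}\,D\beta ,
\]
with $0<a/(\Delta u+a)\le 1$. Hence $|Dh\cdot Df|\le |Df\cdot D\beta|$.

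\emph{Step 3: combining.} Inserting Steps 1 and 2 into \eqref{eq:strong-jacobi} gives
\[
\diver(GD\beta)\ge \tfrac1{72}G_{ij}\beta_i\beta_j+\diver(hDf)-C\Delta u-2|Df\cdot D\beta| .
\]
We absorb the last term exactly as in the proof of Proposition \ref{prop:strong-jacobi}:
\[
2|Df\cdot D\beta|\le \tfrac1{144}G_{ij}\beta_i\beta_j+144\,(G^{-1})_{ij}f_if_j ,
\]
and
\[
144\,(G^{-1})_{ij}f_if_j\le 288(n-1)\,\frac{\|Df\|_{L^\infty(B_{10})}^2}{\inf_{B_{10}} f^2}\,\Delta u .
\]
This leaves the coefficient $\tfrac1{72}-\tfrac1{144}=\tfrac1{144}$ on $G_{ij}\beta_i\beta_j$ and enlarges $C$ by a constant depending only on the allowed data.

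The only delicate point is that the weight $h$ is not constant, so converting $h\Delta f$ into divergence form costs the extra term $Dh\cdot Df$. This is handled by the identity $Dh=\frac{a}{\Delta u+a}D\beta$, which shows this term is of the same type as the $Df\cdot D\beta$ term already present. The distributional formulation then follows trivially from the pointwise inequality, by testing against nonnegative $\varphi\in C_c^\infty(B_{10})$ and integrating by parts.
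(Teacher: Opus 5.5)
Your proposal is correct and follows the paper's proof essentially line by line. Your identity $Dh=\frac{a}{\Delta u+a}D\beta$ is the paper's $h_i=(1-h)\beta_i$. The combined cross term $-(2-h)\,Df\cdot D\beta$ is bounded by $2|Df\cdot D\beta|$ and absorbed by the same Young's inequality with weight $\frac{1}{144}$ and the bound $\tr(G^{-1})\le 2(n-1)\Delta u/f^2$, which halves the gradient coefficient from $\frac{1}{72}$ to $\frac{1}{144}$.
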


\begin{proof}
Since $\partial_iG_{ij}=-f_j$ and $h_i=(1-h)\beta_i$, we have
\[
 \diver(GD\beta)=D_i(G_{ij}\beta_j)=G_{ij}\beta_{ij}-f_j\beta_j
\]
and $$h\Delta f=\diver(hDf)-Dh\cdot Df=\diver(hDf)-(1-h)Df\cdot D\beta.
$$
Inserting this into Jacobi inequality (\ref{eq:strong-jacobi}) leads to 
$$  
\diver(GD\beta)\geq \frac{1}{72} G_{ij}\beta_i\beta_j+\diver(hDf)-C\Delta u-(2-h)Df \cdot D\beta.$$
Using the Young’s inequality with $\epsilon=\frac{1}{144}$, $ \tr(G^{-1})\le 2(n-1)\frac{\Delta u}{f^2}$ and $1\leq 2-h<2$, we obtain
\begin{equation*}
  \begin{split}
    2|Df \cdot D\beta|&\leq  \frac{1}{144} G_{ij}\beta_i\beta_j+144(G^{-1})_{ij}f_if_j \\
      & \leq \frac{1}{144} G_{ij}\beta_i\beta_j
 +288\frac{(n-1) \Delta u}{ f^2}|Df|^2
  \end{split}
\end{equation*}
and therefore
$$  \diver(GD\beta)\geq \frac{1}{144} G_{ij}\beta_i\beta_j+\diver(hDf)-C\Delta u,$$
where $C>0$ only depends on $n$, $K$, $\sup_{B_{10}}f$, $\inf_{B_{10}}f$ and $\|Df\|_{L^{\infty}(B_{10})}$.
Equivalently, for every nonnegative function $\varphi\in C_c^1(B_{10})$, we have 
\begin{equation}\label{eq:weak-test}
 -\int_{B_{10}} G_{ij}\beta_j \varphi_i\,dx
 \ge \frac{1}{144}  \int_{B_{10}} G_{ij}\beta_i\beta_j \varphi\,dx
       -\int_{B_{10}}h f_i \varphi_i\,dx-C\int_{B_{10}}  \Delta u \varphi\,dx.
\end{equation}
\end{proof}

\begin{corollary}[Caccioppoli-type estimate]\label{cor:energy}
Let $f\in C^{2}(B_{10})$ with $\inf_{B_{10}}f>0$ and $u\in C^4(B_{10})$ be a 2-convex solution of (\ref{eq:main-equation}) with $D^2u\geq -KI$ for some positive constant $K$. If $0\le\Phi\le1$ and $\Phi\in C_c^1(B_2)$, then there exists some constant $C>0$ depending only on $n$, $K$, $\sup_{B_{10}}f$, $\inf_{B_{10}}f$, $\|Df\|_{L^\infty(B_{10})} $and $\|D\Phi\|_{L^\infty(B_{10})} $ such that 
\begin{equation*}
 \int_{B_2}\Phi^2G_{ij}\beta_i\beta_j\,dx
 \le C\int_{\operatorname{supp}\Phi} \Delta u\,dx,
\end{equation*}
where $\beta$ comes from Proposition \ref{prop:strong-jacobi}.
\end{corollary}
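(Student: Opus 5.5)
We test the weak Jacobi inequality \eqref{eq:weak-test} of Proposition~\ref{prop:weak-jacobi} with $\varphi=\Phi^2$, which is nonnegative, lies in $C_c^1(B_2)$ and is admissible. This gives
\begin{equation*}
\frac{1}{144}\int \Phi^2 G_{ij}\beta_i\beta_j\,dx
\le -2\int \Phi G_{ij}\beta_j\Phi_i\,dx+2\int h\Phi f_i\Phi_i\,dx+C\int \Phi^2\Delta u\,dx .
\end{equation*}
The last term is already of the desired form, since $\Phi^2\le 1$ and $\Delta u>0$, the latter by \eqref{eq:s-lower}. The second term is bounded by $2\|Df\|_{L^\infty}\|D\Phi\|_{L^\infty}|\operatorname{supp}\Phi|$, using $0\le h\le1$. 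To express this constant through $\int_{\operatorname{supp}\Phi}\Delta u$, we use $\Delta u\ge \frac{2n}{n-1}\inf f$ from \eqref{eq:s-lower}. This gives $|\operatorname{supp}\Phi|\le C(n,\inf f)\int_{\operatorname{supp}\Phi}\Delta u\,dx$.

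The first term is handled by Cauchy--Schwarz with respect to the positive definite matrix $G$ from Lemma~\ref{lem:shift}:
\begin{equation*}
2\left|\Phi G_{ij}\beta_j\Phi_i\right|\le \frac{1}{288}\Phi^2G_{ij}\beta_i\beta_j+288\,G_{ij}\Phi_i\Phi_j .
\end{equation*}
The quadratic piece is absorbed into the left-hand side, which leaves the coefficient $\frac{1}{288}$ there. For the remainder we need an upper bound on $G$, not on $G^{-1}$. Since $G=(\Delta u-f)I-D^2u$ and $D^2u\ge -KI$, we get
\begin{equation*}
G\le (\Delta u-f+K)I\le (\Delta u+K)I .
\end{equation*}
Hence $G_{ij}\Phi_i\Phi_j\le \|D\Phi\|_{L^\infty}^2(\Delta u+K)$ on $\operatorname{supp}\Phi$. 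The additive constant $K$ is again converted into a multiple of $\Delta u$ through the lower bound $\Delta u\ge \frac{2n}{n-1}\inf f$, so $\Delta u+K\le C(n,K,\inf f)\,\Delta u$.

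Combining the three estimates and multiplying through by $288$ yields
\begin{equation*}
\int_{B_2}\Phi^2 G_{ij}\beta_i\beta_j\,dx\le C\int_{\operatorname{supp}\Phi}\Delta u\,dx ,
\end{equation*}
with $C$ depending only on $n$, $K$, $\sup f$, $\inf f$, $\|Df\|_{L^\infty}$ and $\|D\Phi\|_{L^\infty}$. The only step needing care is the upper bound for $G$, because Lemma~\ref{lem:shift} controls only $G^{-1}$. Semi-convexity is exactly what supplies the bound $G\le(\Delta u+K)I$; without it one would only know $|D^2u|\le C\Delta u$ from $2$-convexity, which also suffices. Everything else is routine absorption.
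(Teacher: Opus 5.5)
Your proposal is correct and follows the paper's proof essentially step for step: test \eqref{eq:weak-test} with $\varphi=\Phi^2$, absorb the cross term by Young's inequality with the same constants $\frac{1}{288}$ and $288$, and convert the $h f_i\Phi_i$ term and additive constants into $\int_{\operatorname{supp}\Phi}\Delta u$ via $\Delta u\ge\frac{2n}{n-1}\inf f$. The only cosmetic difference is the upper bound on $G$: you get $G\le(\Delta u+K)I$ from semi-convexity, whereas the paper uses $0<G<2\Delta u\,I$, which comes from $|\lambda|<\sigma_1(\lambda)$ under $2$-convexity (the alternative you mention yourself).
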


\begin{proof}

Take  $\varphi=\Phi^2$ as the test function in \eqref{eq:weak-test}. From $\varphi_i=2 \Phi \Phi_i$, it follows that
\[
\begin{aligned}
\frac{1}{144}\int_{B_2}\Phi^2G_{ij}\beta_i\beta_j\,dx
\leq &-2\int_{B_2}\Phi G_{ij}\beta_j\Phi_i\,dx\\
& +2\int_{B_2}h\Phi f_i\Phi_i\,dx
 +C\int_{B_2}\Delta u\,\Phi^2\,dx.
\end{aligned}
\]
The term $\Phi G_{ij}\beta_j \Phi_i$ can be absorbed by the left-hand term through the Young’s inequality with $\epsilon=\frac{1}{288}$ 
$$
2\left|\Phi G_{ij}\beta_j \Phi_i\right|
\leq \frac{1}{288} \Phi^2 G_{ij}\beta_i\beta_j +
288 G_{ij}\Phi_i\Phi_j.
$$
Recalling $0<G<2 \Delta u I$, we get 
$$288\int_{B_2}  G_{ij}\Phi_i\Phi_j\,dx\leq 576 \|D\Phi\|_{L^{\infty}(B_{10})}^2\int_{\operatorname{supp}\Phi} \Delta u\,dx.$$
Using (\ref{eq:s-lower}), we  see that 
\begin{equation*}
  \begin{split}
     2 \left|\int_{B_2}h \Phi f_i \Phi_i\,dx\right|& \leq 2 \frac{n-1}{2n \inf_{B_{10}}f}\left|\int_{B_2} \Delta u h \Phi f_i \Phi_i\,dx\right|  \\
      & \leq  \frac{n-1}{n \inf_{B_{10}}f}\|D\Phi\|_{L^{\infty}(B_{10})}\|Df\|_{L^{\infty}(B_{10})} \left|\int_{\operatorname{supp}\Phi}  \Delta u\,dx \right|.
  \end{split}
\end{equation*}
Taking the above estimates into consideration, we obtain
\[
\begin{aligned}
\frac{1}{288}\int_{B_2}\Phi^2G_{ij}\beta_i\beta_j\,dx
&\leq \left[
\begin{gathered}
576\|D\Phi\|_{L^{\infty}(B_{10})}^2+C\\
{}+\dfrac{n-1}{n\inf_{B_1}f}\|D\Phi\|_{L^{\infty}(B_{10})}\|Df\|_{L^{\infty}(B_{10})}
\end{gathered}
\right]\\
&\qquad\times
\int_{\operatorname{supp}\Phi}\Delta u\,dx.
\end{aligned}
\]
\end{proof}

\section{Legendre--Lewy transform and mean value inequality}
\noindent

After subtracting a linear function, assume $u(0)=0$ and $Du(0)=0$.  For $\tau=K+1$, we 
define
\begin{equation*}
 y=T(x)=\tau x+Du(x),
 \qquad J(x)=\det DT(x)=\det(\tau I+D^2u(x)).
\end{equation*}
Semi-convexity gives  $DT\ge I$.   
From \[ \bigl(T(x)-T(z)\bigr)\cdot(x-z)
\geq(\tau-K)|x-z|^2
=|x-z|^2,\]
it follows that $T$ is one-to-one and its inverse is
$1$-Lipschitz.    Moreover, for $|x|=1$,
\[ T(x)\cdot x=\tau +Du(x)\cdot x\ge1,
\]
which implies that 
\begin{equation*}
 B_1^y\subset T(B_1^x).
\end{equation*}

\begin{proposition}[Transformation rule]
Let $f\in C^{2}(B_{10})$ with $\inf_{B_{10}}f>0$ and $u\in C^4(B_{10})$ be a 2-convex solution of (\ref{eq:main-equation}) with $D^2u\geq -KI$ for some positive constant $K$. Then there exists some constant $C>0$ depending only on $n$, $K$, $\sup_{B_{10}}f$, $\inf_{B_{10}}f$ and $\|Df\|_{L^{\infty}(B_{10})}$ such that 
\begin{equation}\label{eq:pushed-jacobi}
 \diver(\cG D\beta^*)
 \ge\diver\mathcal F-C
 \qquad\text{in}\ \ \ B_1^y
\end{equation} 
in the sense of distributions, where 
$\beta^{*}(y):=\beta(T^{-1}y)$ and 
\[\mathcal{G}(y) :=\left(\frac{DTG (DT)^T}{J}\right)(T^{-1}y),
\]
\[\mathcal{F}(y):=\left(\frac{h DT Df}{J}\right)(T^{-1}y).
\]
Here $\beta$ and $h$ are defined in Proposition \ref{prop:strong-jacobi}.
\end{proposition}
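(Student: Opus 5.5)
We will push the weak Jacobi inequality of Proposition \ref{prop:weak-jacobi} forward under the diffeomorphism $T$ by the change of variables $y=T(x)$, $dy=J\,dx$. The key identity is the Piola-type formula for divergence-form operators. For a vector field $V$ on the $x$-side, set $V^*(y)=\bigl(DT\,V/J\bigr)(T^{-1}y)$. Then, for every test function $\psi\in C^1_c(B_1^y)$ and $\varphi=\psi\circ T$, we have $D_x\varphi=(DT)^T (D_y\psi)\circ T$. Consequently
\[
\int V\cdot D_x\varphi\,dx=\int (DT\,V)\cdot D_y\psi\circ T\,dx=\int V^*\cdot D_y\psi\,dy.
\]
Note that $DT=\tau I+D^2u$ is symmetric, so this needs no divergence-free property of the cofactor matrix, only the chain rule and the area formula. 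This holds because $T$ is a $C^3$ diffeomorphism onto its image with $J\ge 1$. Moreover, $\operatorname{supp}\varphi\subset T^{-1}(B_1^y)\subset B_1^x\subset B_{10}$, using that $T^{-1}$ is $1$-Lipschitz and $T(0)=0$.

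\textbf{Step 1.} Apply the identity with $V=GD\beta$. Since $D_x\beta=(DT)^T D_y\beta^*\circ T$, we get $DT\,G\,D\beta/J=\bigl(DT\,G\,(DT)^T/J\bigr)D_y\beta^*=\cG D\beta^*$. With $V=hDf$ we get exactly $\mathcal F$. Hence the left side of \eqref{eq:weak-test} becomes $-\int \cG D\beta^*\cdot D\psi\,dy$, and the $f$-term becomes $-\int\mathcal F\cdot D\psi\,dy$.

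\textbf{Step 2.} Drop the nonnegative term $\frac1{144}\int G_{ij}\beta_i\beta_j\varphi\ge0$, using $G>0$ from Lemma \ref{lem:shift}.

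\textbf{Step 3.} It remains to control
\[
C\int\Delta u\,\varphi\,dx=C\int \frac{\Delta u}{J}\circ T^{-1}\,\psi\,dy .
\]
This is the only nontrivial point, and it is where semi-convexity enters. The eigenvalues of $DT$ are $\tau+\lambda_i\ge1$, so
\[
J=\prod_i(\tau+\lambda_i)\ge \tau+\lambda_1 .
\]
Also $\Delta u\le n\lambda_1^+$, which gives $\Delta u\le n(\tau+\lambda_1)$. Thus $\Delta u/J\le n$, and the term is bounded by $nC\int\psi\,dy$.

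Putting the steps together, for nonnegative $\psi$ we obtain
\[
-\int\cG D\beta^*\cdot D\psi\,dy\ge-\int\mathcal F\cdot D\psi\,dy-C'\int\psi\,dy,
\]
which is \eqref{eq:pushed-jacobi} in the distributional sense.

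The main obstacle is bookkeeping: checking that the transposes land correctly in the Piola identity, and that test functions compactly supported in $B_1^y$ pull back to admissible $C^1_c$ functions in $B_{10}$. The bound $\Delta u\le nJ$ from Step 3 is what makes the constant independent of the Hessian bound.
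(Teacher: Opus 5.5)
Your proposal is correct and follows the paper's proof essentially step for step. You pull back a nonnegative test function by $T$, use $D\varphi=(DT)^T(D\psi)\circ T$ and $dy=J\,dx$ to transform the weak Jacobi inequality, drop the nonnegative gradient term, and bound $\Delta u/J$. The differences are cosmetic: you get $\Delta u/J\le n$ from $J\ge\tau+\lambda_1$, whereas the paper gets $\Delta u/J\le 1$ from $J\ge 1+nK+\Delta u$; and you check that the test function is admissible via the $1$-Lipschitz inverse, while the paper uses $B_1^y\subset T(B_1^x)$.
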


\begin{proof}
For any  nonnegative function $\phi\in C^\infty_{c}(B_{1}^{y})$, we set
$$\varphi(x)=\phi(T(x)).$$
Since $ \operatorname{supp} \phi \Subset B_1^y\subset T(B_1^x)$ and $T$ is one-to-one, we conclude that $\varphi(x)$ is a legitimate test function in $B_1^x$. Plugging $\varphi$ in \eqref{eq:weak-test}, we have 
\begin{equation*}
  \begin{split}
   -\int_{B_1^x} G_{ij}\beta_j \varphi_i\,dx
 &\ge \frac{1}{144} \int_{B_1^x} G_{ij}\beta_i\beta_j \varphi\,dx
       -\int_{B_1^x}h f_i \varphi_i\,dx-C\int_{B_1^x}  \Delta u \varphi\,dx\\
   & \geq-\int_{B_1^x}h f_i \varphi_i\,dx-C\int_{B_1^x}  \Delta u \varphi\,dx.
  \end{split}
\end{equation*}
First, a direct computation yields 
$$D\varphi=(DT) ^{T} D\phi, \qquad D\beta=(DT) ^{T} D\beta^{*}.$$
Consequently, we have 
\[
\begin{aligned}
GD\beta\cdot D\varphi
&=G(DT)^TD\beta^*\cdot(DT)^TD\phi\\
&=D\phi^TDTG(DT)^TD\beta^*\\
&=\left(DTG(DT)^TD\beta^*\right)\cdot D\phi.
\end{aligned}
\]
By the coordinate transformation, we obtain 
$$
\int_{B_1^x} G D \beta\cdot D \varphi\,dx
= \int_{B_1^y} \frac{\left(DT G (DT)^{T}D\beta^{*}\right)}{J}
\cdot D\phi\,dy.
$$
Next, from $
Df\cdot D\varphi =Df\cdot (DT)^{T}D\phi=\left(DT Df\right)\cdot D\phi$, it follows that 
\[
\int_{B_1^x} h Df\cdot D\varphi\,dx =\int_{B_1^y} \frac{h DT Df}{J}\cdot D\phi\,dy.
\]
Finally, it is easy to see that 
\[ \int_{B_1^x} \Delta u\varphi\,dx = \int_{B_1^y} \frac{\Delta u}{J}\phi(y)\,dy.\] 
By $D^2u+KI>0$ and $J=\det DT=\det(I+KI+D^2u)$, we know that 
$$J\geq 1+nK+\Delta u.$$
Hence  
$$\int_{B_1^y} \frac{\Delta u}{J}\phi(y)\,dy\leq \int_{B_1^y} \phi(y)\,dy.$$
We complete the proof.
\end{proof}

We prove that the equation  \eqref{eq:pushed-jacobi} is uniformly elliptic. 

\begin{proposition}[Uniform ellipticity ]\label{prop:uniform}
Let $f\in C^{2}(B_{10})$ with $\inf_{B_{10}}f>0$ and $u\in C^4(B_{10})$ be a 2-convex solution of (\ref{eq:main-equation}) with $D^2u\geq -KI$ for some positive constant $K$. Then there exists some constant  $C>0$ depending only on
$n$, $K$, $\inf_{B_{10}}f$ and $\sup_{B_{10}}f$ such that
\begin{equation*}
 \frac{1}{C}I\le\cG\le C I,
 \qquad
 \|\mathcal F\|_{L^\infty(B_1^y)}\le \|Df\|_{L^{\infty}(B_1^x)}.
\end{equation*}
\end{proposition}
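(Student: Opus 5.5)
The plan is to compute everything pointwise in a frame that diagonalizes $D^2u$ and read off the bounds from Lemma~\ref{lem:spectral}.

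\textbf{Diagonal form.} Fix $y=T(x)$ and, exactly as in the proof of Proposition~\ref{prop:strong-jacobi}, rotate coordinates so that $D^2u(x)=\operatorname{diag}(\lambda_1,\ldots,\lambda_n)$ with $\lambda_1\ge\cdots\ge\lambda_n\ge -K$. Conjugating by the same orthogonal matrix transforms $\mathcal G$ and $\mathcal F$ covariantly, so their eigenvalues and norms are unchanged. In this frame
\[
DT=\operatorname{diag}(\tau+\lambda_i),\qquad G=\operatorname{diag}(g_i),\quad g_i=\sigma_1(\lambda)-f-\lambda_i,\qquad J=\prod_j(\tau+\lambda_j),
\]
with $\tau=K+1$. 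Hence $\mathcal G$ is diagonal with entries
\[
\mathcal G_{ii}=\frac{(\tau+\lambda_i)\,g_i}{\prod_{j\ne i}(\tau+\lambda_j)}.
\]
Lemma~\ref{lem:spectral} applies with $\underline f=\inf_{B_{10}} f$ and $\overline f=\sup_{B_{10}} f$. It gives $1\le \tau+\lambda_j\le \widetilde C+(n-2)K+K+1=:M$ for every $j\ge2$, with $M$ depending only on $n,K,\overline f$.

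\textbf{Case $i=1$.} Here $\mathcal G_{11}=(K+1+\lambda_1)g_1\big/\prod_{j\ge2}(\tau+\lambda_j)$, and the denominator lies in $[1,M^{n-1}]$. For the numerator, Lemma~\ref{lem:spectral} gives the lower bound $(K+1+\lambda_1)g_1\ge \underline f^2/(4(n-1))$. It also gives the upper bound $(K+1+\lambda_1)g_1\le (K+1+\overline f)\sigma_{1;1}(\lambda)+|\sigma_{2;1}(\lambda)|$. The right-hand side is bounded because $\sigma_{1;1}(\lambda)\le\widetilde C$ and each $|\lambda_j|\le M$ for $j\ge2$, so $|\sigma_{2;1}(\lambda)|$ is controlled by $n$ and $M$.

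\textbf{Case $i\ge2$.} Write
\[
\mathcal G_{ii}=(\tau+\lambda_i)\cdot\frac{g_i}{K+1+\lambda_1}\cdot\frac{1}{\prod_{j\ne 1,i}(\tau+\lambda_j)}.
\]
The first factor lies in $[1,M]$ and the last lies in $[M^{-(n-2)},1]$. The middle factor is bounded above by $\sigma_{1;1}(\lambda)+\overline f+1\le \widetilde C+\overline f+1$ by Lemma~\ref{lem:spectral}. It is bounded below by the explicit positive minimum $\min\{\tfrac14,\ \underline f^2/(36M^2(n-1))\}$ from the same lemma. Together the two cases give $C^{-1}I\le\mathcal G\le CI$ with $C=C(n,K,\underline f,\overline f)$.

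\textbf{Bound on $\mathcal F$.} We have $0\le h<1$, since $\Delta u>0$ by \eqref{eq:s-lower}. The matrix $DT/J$ is diagonal with entries $1/\prod_{j\ne i}(\tau+\lambda_j)\le1$, because each $\tau+\lambda_j\ge1$. Therefore $|\mathcal F(y)|\le|Df(T^{-1}y)|$. Since $T^{-1}(B_1^y)\subset B_1^x$, this gives $\|\mathcal F\|_{L^\infty(B_1^y)}\le\|Df\|_{L^\infty(B_1^x)}$.

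\textbf{Main obstacle.} The only delicate point is the $i=1$ direction. There both $\tau+\lambda_1$ and $1/g_1$ may blow up, so the argument needs the two-sided bound on the product $(K+1+\lambda_1)g_1$, not on either factor separately. That two-sided bound is exactly what Lemma~\ref{lem:spectral} supplies, so the step reduces to invoking it; everything else is bookkeeping.
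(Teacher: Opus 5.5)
Your proposal is correct and follows the paper's proof essentially step for step. You diagonalize $D^2u$, write $\mathcal G_{ii}=(\tau+\lambda_i)g_i/\prod_{j\ne i}(\tau+\lambda_j)$, take two-sided bounds from Lemma~\ref{lem:spectral} (on the product $(K+1+\lambda_1)g_1$ for $i=1$, on the ratio $g_i/(K+1+\lambda_1)$ for $i\ge 2$), and use $\tau+\lambda_j\ge 1$ to get $\|DT/J\|\le 1$; along the way you also make explicit $0\le h<1$ and $T^{-1}(B_1^y)\subset B_1^x$, which the paper leaves implicit.
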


\begin{proof}
As in the proof of Proposition \ref{prop:strong-jacobi}, we can assume that $D^2u$ is diagonal at $x_0\in B_1$. Denote $\lambda_1\ge\cdots\ge\lambda_n\ge-K$ as the eigenvalues  of $D^2u(x_0)$. Then $DT=\tau I+D^2u$ and $G=(\Delta u-f)I-D^2u$ also are diagonal.

It is clear that 
$$\mathcal{G}_{ii}(y)=\left(\frac{DTG (DT)^T}{J}\right)(T^{-1}y)= \frac{(\tau+\lambda_i)^2 G_{ii}}
 {\prod_{j=1}^n(\tau+\lambda_j)}.$$
 In fact, Lemma \ref{lem:spectral} implies that all $\lambda_i$ for $i\geq 2$ satisfy
 $$1\leq \tau +\lambda_i\leq C$$
 for some constant $C>0$ depending only on $n$, $K$ and $\sup_{B_{10}}f$.

Lemma \ref{lem:spectral} gives
\[\frac{1}{C}\leq 
 \cG_{11}
 =\frac{(\tau+\lambda_1)G_{11}}
 {\prod_{j=2}^n(\tau+\lambda_j)}\leq C
\]
 and 
\[ \frac{1}{C}\leq 
 \cG_{ii}
 =\frac{(\tau+\lambda_i)^2G_{ii}}
 {(\tau+\lambda_1)\prod_{j=2}^n(1+\lambda_j)} \leq C, \qquad i\geq 2,
\]
where  constant $C>0$ depends only on $n$, $K$, $\inf_{B_{10}}f$ and $\sup_{B_{10}}f$.
This proves uniform ellipticity.  Finally,
\[
 \left\|\frac{DT}{J}\right\|
 =\max_i\frac{\tau+\lambda_i}{\prod_j(\tau+\lambda_j)}\le1,
\]
and therefore $\|\mathcal F\|_{L^{\infty}(B_1^y)}\le \|Df\|_{L^{\infty}(B_1^x)}$.
\end{proof}

We introduce the following  local boundedness estimate from \cite[Theorem 8.17]{GT83} and \cite{HL11}.

\begin{lemma}\label{lem:mean-value}
Let $F,E \in L^\infty(B_1)$ and $$\lambda I\leq A\leq \Lambda I$$
for some constants $0<\lambda\leq \Lambda<+\infty$.
 Assume that  $u\in W^{1,2}(B_1)$ satisfy
\[  \diver( A Du)\ge\diver F-E
\ \ \ \mbox{in}\ \ \ B_1.
\]
Then
\begin{equation*}
 \sup_{B_{1/2}}u
 \le C\left(1+\|u_+\|_{L^1(B_1)}
 +\|F\|_{L^\infty(B_1)}+\|E\|_{L^\infty(B_1)}\right),
\end{equation*}
where $C>0$ depends only on $n$, $\lambda$ and $\Lambda$.
\end{lemma}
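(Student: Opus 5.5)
The plan is to reduce to the classical De Giorgi--Nash--Moser local boundedness estimate for $W^{1,2}$ subsolutions, and then to lower the integrability exponent on the right-hand side from $L^2$ to $L^1$ by a standard interpolation and iteration argument, as in \cite{HL11}.

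\emph{Step 1 (the $L^2$ estimate).} I would invoke \cite[Theorem 8.17]{GT83} with exponent $p=2$. Since $F,E\in L^\infty(B_1)$, they belong to $L^q$ and $L^{q/2}$ for any fixed $q>n$. For every ball $B_{R}(x_0)\subset B_1$ and every $0<r<R$ this gives
\[
\sup_{B_r(x_0)}u\le C\Bigl((R-r)^{-n/2}\|u_+\|_{L^2(B_R(x_0))}+\|F\|_{L^\infty(B_1)}+\|E\|_{L^\infty(B_1)}\Bigr),
\]
with $C=C(n,\lambda,\Lambda)$. To get the explicit factor $(R-r)^{-n/2}$, I would apply the theorem on balls of radius comparable to $R-r$ centred at points of $B_r(x_0)$ and rescale. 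Under the rescaling $x\mapsto x_0+\rho x$, the data $F$ and $E$ are multiplied by $\rho$ and $\rho^2$ respectively. Since $\rho\le1$, their contribution stays bounded by the unscaled norms, and the ellipticity constants are unchanged. The same theorem also shows that $u_+$ is locally bounded, which the next step needs.

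\emph{Step 2 (interpolation down to $L^1$).} Fix $1/2\le r<R\le 1$ and set $M(s):=\sup_{B_s}u_+$, which is finite for $s<1$. Using
\[
\|u_+\|_{L^2(B_R)}\le M(R)^{1/2}\|u_+\|_{L^1(B_1)}^{1/2}
\]
and Young's inequality in the estimate of Step 1, I expect to obtain
\[
M(r)\le \tfrac12 M(R)+C(R-r)^{-n}\|u_+\|_{L^1(B_1)}+C\bigl(\|F\|_{L^\infty}+\|E\|_{L^\infty}\bigr).
\]
Taking the positive part on the left is harmless: the right-hand side of Step 1 is nonnegative, so it also bounds $M(r)$.

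\emph{Step 3 (iteration).} The standard iteration lemma applies to a bounded nonnegative function $M$ on $[1/2,\rho]$ satisfying $M(r)\le\frac12M(R)+A(R-r)^{-n}+B$ for all $1/2\le r<R\le\rho$. It yields $M(1/2)\le C(n)\bigl(A(\rho-1/2)^{-n}+B\bigr)$. Applying it with $\rho<1$ close to $1$ and using the data from Step 2 gives the claimed bound, with the harmless extra constant $1$ absorbed.

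The main obstacle is that $M(R)$ may blow up as $R\to1$, because $u$ is only assumed to be in $W^{1,2}(B_1)$. I would handle this by running the iteration on $[1/2,\rho]$ for a fixed $\rho<1$, say $\rho=3/4$, where $M$ is finite by Step 1. The resulting constant depends only on $n$, $\lambda$ and $\Lambda$.
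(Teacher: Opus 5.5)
Your proposal is correct and follows the route the paper itself relies on: the paper gives no proof of its own and simply cites Gilbarg--Trudinger, Theorem 8.17, together with Han--Lin. Your Steps 1--3 carry out exactly that combination: the $L^2$ local boundedness with $L^\infty$ data $F,E$ (rescaled via a covering), then Young's inequality and the standard iteration lemma on $[1/2,3/4]$ to lower the exponent to $L^1$, with the finiteness of $M$ on that interval correctly ensured by Step 1.
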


Applying Lemma \ref{lem:mean-value} to \eqref{eq:pushed-jacobi} and using
Proposition \ref{prop:uniform}, we obtain
\begin{equation}\label{eq:beta-mean}
 \beta(0)=\beta^*(0)
 \le C\left(1+\int_{B_1^y}(\beta^*)_+\,dy \right)
\end{equation}
for some constant  $C>0$ depending only on $n$, $K$, $\sup_{B_{10}}f$, $\inf_{B_{10}}f$ and $\|Df\|_{L^{\infty}(B_{10})}$.

\section{Proof of Theorem \ref{thm:main}}
\noindent

Once we estimate the integral in \eqref{eq:beta-mean}, we prove Theorem \ref{thm:main}. 
In view of  Lemma \ref{lem:spectral} and $\lambda_1\leq \sigma_1(\lambda)$, 
the Jacobian $J$ satisfies
\begin{equation*}
 J=(K+1+\lambda_1)\prod_{i=2}^n(K+1+\lambda_i)
 \le C(K+1+\lambda_1)\leq C(1+\sigma_1(\lambda)),
\end{equation*}
where $\lambda=(\lambda_1,\ldots,\lambda_n)$ are the eigenvalues of $D^2u$.  
Hence
\begin{equation}\label{eq:beta-change}
 \begin{split}
  \int_{B_1^y}(\beta^*)_+(y)\,dy& =\int_{T^{-1}(B_1^y)} (\beta^*)_+(Tx)J(x)\,dx \\
     & =\int_{T^{-1}(B_1^y)} (\beta)_+(x)J(x)\,dx\\
     &\leq C \int_{B_1} \beta_+(1+\Delta u)\,dx,
 \end{split}
\end{equation}
where  $C>0$ depends only on $n$, $K$ and $\sup_{B_{10}}f$.

\begin{lemma}\label{lem:closure}
Let $f\in C^{2}(B_{10})$ with $\inf_{B_{10}}f>0$ and $u\in C^4(B_{10})$ be a 2-convex solution of (\ref{eq:main-equation}) with $D^2u\geq -KI$ for some positive constant $K$. Then we have 
\begin{equation}\label{eq:closure}
 \int_{B_1}\beta_+(1+\Delta u)\dd x
 \le C(1+\|Du\|_{L^{\infty}(B_2)})(\|Du\|_{L^{\infty}(B_2)}+a) 
\end{equation}
for some constant $C>0$ and $a>0$ depending only on  $n$, $K$, $\sup_{B_{10}}f$, $\inf_{B_{10}}f$ and $\|Df\|_{L^\infty(B_{10})}$, where $\beta$ is defined in Proposition \ref{prop:strong-jacobi}.
\end{lemma}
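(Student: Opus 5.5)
We split $\int_{B_1}\beta_+(1+\Delta u)\,dx=\int_{B_1}\beta_+\,dx+\int_{B_1}\beta_+\Delta u\,dx$ and treat the two pieces separately, following the weighted energy argument of Shankar--Yuan and Chen--Jian--Zhou.

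\textbf{The term without weight.} Since $\beta=\log(\Delta u+a)$ and $\log s\le s$, we have $\beta_+\le \Delta u+a$. Take a cutoff $\psi\in C_c^\infty(B_2)$ with $\psi=1$ on $B_1$. Then
\[
\int_{B_1}\Delta u\le\int\psi\Delta u=-\int D\psi\cdot Du\le C\|Du\|_{L^\infty(B_2)},
\]
so this piece is bounded by $C(\|Du\|_{L^\infty(B_2)}+a)$.

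\textbf{The weighted term.} Write $\Delta u=\operatorname{tr}D^2u$. Using $G=(\Delta u-f)I-D^2u$, we have $\operatorname{tr}G=(n-1)\Delta u-nf$, hence $\Delta u=\frac{1}{n-1}(\operatorname{tr}G+nf)$. By the divergence identity $\partial_iG_{ij}=-f_j$ from Lemma~\ref{lem:shift},
\[
G_{ij}=\partial_i\bigl(G_{ij}x_j\bigr)+f_jx_j\delta\text{-terms},
\]
and in particular $\operatorname{tr}G=\partial_i(G_{ij}x_j)+x_jf_j$. It is simpler, though, to integrate by parts directly with $\Delta u=\partial_i u_i$. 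Take $\psi$ as above. Then
\[
\int\psi\beta_+\Delta u=-\int\psi\beta_+' \,\beta_iu_i-\int\beta_+\psi_iu_i
\le\|Du\|_{L^\infty(B_2)}\Bigl(\int\psi|D\beta|+C\int_{B_2}\beta_+\Bigr).
\]
The last integral is controlled as in the first step, using $\beta_+\le\Delta u+a$ on $B_2$ with a larger cutoff supported in $B_3$. Since Corollary~\ref{cor:energy} is stated for $\operatorname{supp}\Phi\subset B_2$, we would rescale the balls (for instance $B_1\subset B_{3/2}\subset B_2$) so that all cutoffs stay in $B_2$ while $\|Du\|_{L^\infty(B_2)}$ still controls $\int\Delta u$.

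\textbf{The gradient term (main obstacle).} For $\int\psi|D\beta|$ we use Cauchy--Schwarz with the lower bound on $G$:
\[
\int\psi|D\beta|\le\Bigl(\int\psi^2G_{ij}\beta_i\beta_j\Bigr)^{1/2}\Bigl(\int_{\operatorname{supp}\psi}\operatorname{tr}(G^{-1})\Bigr)^{1/2}.
\]
The first factor is at most $C(\int_{\operatorname{supp}\Phi}\Delta u)^{1/2}$ by Corollary~\ref{cor:energy}. The second factor is at most $C(\int\Delta u)^{1/2}$ because $\operatorname{tr}(G^{-1})\le 2(n-1)\Delta u/f^2$ by Lemma~\ref{lem:shift}. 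Both are bounded by $C(\|Du\|_{L^\infty(B_2)}+1)^{1/2}$ after integrating $\Delta u$ by parts against a cutoff. Hence
\[
\int\psi|D\beta|\le C(1+\|Du\|_{L^\infty(B_2)}).
\]

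Multiplying by the prefactor $\|Du\|_{L^\infty(B_2)}$ and adding the first piece gives a bound of the form $C(1+\|Du\|)(\|Du\|+a)$, which is \eqref{eq:closure}.

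The delicate point is the bookkeeping of nested supports, so that the Caccioppoli estimate and the integrations by parts of $\Delta u$ all use cutoffs compactly supported in $B_2$. A secondary point is that $\beta_+$ is only Lipschitz; we handle this either by using $\beta_+'\in\{0,1\}$ almost everywhere, or by noting that $\beta\ge\log a>0$ once $a\ge1$, so that $\beta_+=\beta$.
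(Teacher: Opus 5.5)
Your proposal is correct and follows essentially the same route as the paper: bound $\int\beta_+$ via $\beta_+\le\Delta u+a$, integrate $\beta_+\,\partial_i u_i$ by parts against a cutoff, and control $\int\chi D\beta_+\cdot Du$ by Cauchy--Schwarz with weights $G^{1/2}$ and $G^{-1/2}$, using Corollary~\ref{cor:energy} together with $\operatorname{tr}(G^{-1})\le 2(n-1)\Delta u/f^2$. The differences are cosmetic: the paper bounds $\int_{B_2}\Delta u$ directly by the divergence theorem on $\partial B_2$, which makes your nested-support bookkeeping unnecessary, and it pairs $G^{-1/2}$ with $Du$ instead of first pulling out $\|Du\|_{L^\infty}$. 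The garbled identity for $G_{ij}$ that you set aside plays no role in the argument.
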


\begin{proof}
By divergence theorem, we have 
\begin{equation}\label{eq:s-L1}
 \int_{B_{2}} \Delta u\,dx=\int_{\partial B_{2} }Du\cdot \nu \,dS\leq C(n) \|Du\|_{L^{\infty}(B_2)}.
\end{equation}
Moreover, observing $\beta_{+}=(\log (\Delta u+a))_+\le \Delta u+a$, we get the estimate 
\begin{equation}\label{eq10}
  \int_{B_{2}}\beta_{+}\,dx\leq  \int_{B_{2}} (\Delta u+a)\,dx\leq C(n)( \|Du\|_{L^{\infty}(B_2)}+a).
\end{equation}

It remains to estimate $\int_{B_1} \beta_+ \Delta u\,dx$.  Let
$\chi\in C_c^1(B_{2})$ be $0\leq \chi \leq 1$ and $\chi=1$ in $B_{1}$.  Integration by parts gives
\[
\begin{aligned}
\int_{B_1}\chi\beta_+\Delta u\,dx
&\leq \int_{B_{2}}\chi\beta_+\Delta u\,dx\\
&=-\int_{B_{2}}\chi D\beta_+\cdot Du\,dx
  -\int_{B_{2}}\beta_+D\chi\cdot Du\,dx.
\end{aligned}
\]
Clearly, by (\ref{eq10}), we can estimate the second term 
\begin{equation*}
  \begin{split}
    \left|\int_{B_{2}} \beta_{+} D\chi\cdot Du\,dx\right| & \leq  \|Du\|_{L^{\infty}(B_2)} \|D\chi\|_{L^{\infty}(B_2)} \int_{B_{2}}\beta_{+}\,dx\\
       & \leq C(n)\|Du\|_{L^{\infty}(B_2)} \|D\chi\|_{L^{\infty}(B_2)} ( \|Du\|_{L^{\infty}(B_2)}+a).
   \end{split}
\end{equation*}
Using the Cauchy–Schwarz inequality and Hölder inequality, we see that 
\begin{equation*}
\begin{aligned}
\left|\int_{B_{2}}\chi D\beta_+\cdot Du\,dx\right|
&=\left|\int_{B_{2}}
 \chi G^{1/2}D\beta_+\cdot G^{-1/2}Du\,dx\right|\\
&\leq \int_{B_{2}}
 |\chi G^{1/2}D\beta_+|\,|G^{-1/2}Du|\,dx\\
&\leq
\left(\int_{B_{2}}
 \chi^2G_{ij}(\beta_+)_i(\beta_+)_j\,dx\right)^{1/2}\\
&\qquad\times
\left(\int_{B_{2}}(G^{-1})_{ij}u_i u_j\,dx\right)^{1/2}.
\end{aligned}
\end{equation*}
In view of Corollary \ref{cor:energy} and $D\beta_{+}=\mathbf{1}_{\{\beta>0\}}D\beta$ a.e., we have
\[\begin{aligned}
\int_{B_{2}}\chi^2G_{ij}(\beta_+)_i(\beta_+)_j\,dx
&\leq \int_{B_{2}}\chi^2G_{ij}\beta_i\beta_j\,dx\\
&\leq C\int_{B_{2}}\Delta u\,dx\\
&\leq C\|Du\|_{L^{\infty}(B_2)}.
\end{aligned}
\]
for some constant $C>0$ depending only on $n$, $K$, $\sup_{B_{10}}f$,
$\inf_{B_{10}}f$, $\|Df\|_{L^\infty(B_{10})}$ and
\linebreak[2]$\|D\chi\|_{L^\infty(B_{10})}$ .

For the second factor, (\ref{eq:G-basic}) in Lemma \ref{lem:shift} and \eqref{eq:s-L1} imply
\begin{equation*}
  \begin{split}
   \int_{B_{2}}(G^{-1})_{ij}u_iu_j\,dx  & \leq 2(n-1)\frac{\|Du\|_{L^{\infty}(B_2)}^2}{f^2}\int_{B_{2}} \Delta u\,dx \\
      & \leq \frac{C(n)}{\inf_{B_{10}}f^2}
\|Du\|_{L^{\infty}(B_2)}^3.
  \end{split}
\end{equation*}
This proves \eqref{eq:closure}.  
\end{proof}
\vspace{5pt}

\begin{proof}[Proof of Theorem \ref{thm:main}]
Combining \eqref{eq:beta-mean}, \eqref{eq:beta-change} and Lemma
\ref{lem:closure} gives \[
 \log (\Delta u(0)+a)\le  C(1+\|Du\|_{L^{\infty}(B_2)})(\|Du\|_{L^{\infty}(B_2)}+a),
\]
where  $C>0$ and $a>0$ depends only on  $n$, $K$, $\sup_{B_{10}}f$, $\inf_{B_{10}}f$ and $\|Df\|_{L^\infty(B_{10})}$.
Therefore, we have
$$|D^2u(0)|\leq \Delta u(0)\leq \exp{\{C(1+\|Du\|_{L^{\infty}(B_2)})(\|Du\|_{L^{\infty}(B_2)}+a)\}}.$$
Furthermore, applying the interior gradient estimate of Hessian quotient 
equations (see \cite[Theorem 1.1]{Chen15}), we obtain the estimate  \eqref{eq:main-estimate}.
\end{proof}

\section{Singular solution}
\noindent

\begin{proof}[Proof of Theorem \ref{thm2}]
Without loss of generality, we only give a proof in $n=2$.
We show that $u_\alpha$ is a convex viscosity solution of \eqref{eq:quotient} by smooth approximation. For $\varepsilon>0$, let $\phi_{\alpha,\varepsilon}$ be the smooth function
determined by
\begin{equation*}
 \phi_{\alpha,\varepsilon}''(t)
 =(t^2+\varepsilon^2)^{-\alpha/2},
 \qquad
 \phi_{\alpha,\varepsilon}(0)
 =\phi_{\alpha,\varepsilon}'(0)=0,
\end{equation*}
that is,
\[  \phi_{\alpha,\varepsilon}(t)
 =\int_0^t (t-s)(s^2+\varepsilon^2)^{-\alpha/2}\,ds.\]
 Set
\[ u_{\alpha,\varepsilon}(x_1,x_2)
 =\phi_{\alpha,\varepsilon}(x_1)+\frac{x_2^2}{2}.\]
Then $u_{\alpha,\varepsilon}$ is smooth and strictly convex, and
\[  D^2u_{\alpha,\varepsilon}
 =\operatorname{diag}  \bigl((x_1^2+\varepsilon^2)^{-\alpha/2},1\bigr).
\]
Consequently, it is a classical solution of
\begin{equation}\label{eq:smooth-equation}
 \frac{\sigma_2(D^2u_{\alpha,\varepsilon})}
      {\sigma_1(D^2u_{\alpha,\varepsilon})}
 =f_{\alpha,\varepsilon}(x),
 \qquad
 f_{\alpha,\varepsilon}(x)  =\frac{1}{1+(x_1^2+\varepsilon^2)^{\alpha/2}}.
\end{equation}
Since $|s|^{-\alpha}$ is locally integrable when $\alpha<1$, dominated
convergence in the integral representation of $\phi_{\alpha,\varepsilon}$ gives
\[ u_{\alpha,\varepsilon}\longrightarrow u_\alpha
 \quad\text{locally uniformly in }B_2.
\]
Also,
\[  f_{\alpha,\varepsilon}\longrightarrow f_\alpha
 \quad\text{uniformly in }B_2.
\]
 The standard stability theorem for viscosity solutions,
applied to \eqref{eq:smooth-equation}, therefore implies that $u_\alpha$ is a
convex viscosity solution of \eqref{eq:quotient} in $B_2$. 

At every point with $x_1\neq0$, we have 
\begin{equation*}
 D^2u_\alpha
 =\begin{pmatrix}
    |x_1|^{-\alpha}&0\\
    0&1
   \end{pmatrix},
\end{equation*}
which implies that 
\[ u_{\alpha,11}(x)=|x_1|^{-\alpha}
 \longrightarrow+\infty
 \qquad\text{as }\ x_1\to0,
\]
On the other hand, it is clear that $f_\alpha\in C^{0,\alpha}(B_2)$, but 
\[
 \frac{|f_\alpha(x_1,0)-f_\alpha(0,0)|}{|x_1|}
 =\frac{|x_1|^{\alpha-1}}{1+|x_1|^\alpha}
 \longrightarrow+\infty
 \qquad\text{as }\ x_1 \to0.
\]
Thus $f_\alpha$ is not Lipschitz at the origin. Consequently, the Lipschitz exponent in the $C^2$ regularity theorem is optimal.

\section*{Acknowledgments}
The authors are grateful to Xingchen Zhou for  his valuable comments and suggestions. The authors acknowledge the use
of AI tools. All mathematical arguments and proofs in the final manuscript were independently verified and written by the authors.

\end{proof}

\end{document}